\documentclass[11pt,a4paper]{article}
\usepackage[margin=25mm,headheight=14pt]{geometry}
\usepackage{fontspec}
\usepackage{amsmath,amssymb,amsthm,mathtools,tikz}
\usepackage[hidelinks,unicode]{hyperref}
\usepackage{microtype}
\allowdisplaybreaks[2]
\newtheorem{theorem}{Theorem}[section]
\newtheorem{proposition}[theorem]{Proposition}
\newtheorem{lemma}[theorem]{Lemma}
\newtheorem{corollary}[theorem]{Corollary}
\theoremstyle{remark}
\newtheorem{remark}[theorem]{Remark}
\theoremstyle{plain}
\newtheorem{maintheorem}{Theorem}

\DeclareMathOperator{\Mod}{Mod}
\DeclareMathOperator{\Sp}{Sp}
\DeclareMathOperator{\SL}{SL}
\DeclareMathOperator{\GL}{GL}
\DeclareMathOperator{\Aut}{Aut}
\DeclareMathOperator{\Out}{Out}
\DeclareMathOperator{\SAut}{SAut}
\DeclareMathOperator{\SOut}{SOut}

\DeclareMathOperator{\tr}{tr}
\newcommand{\Z}{\mathbb Z}
\newcommand{\Q}{\mathbb Q}
\newcommand{\Mg}{\mathcal M_g}
\hypersetup{pdftitle={Infinitely many Nielsen classes in mapping class groups and free-group automorphism groups},pdfauthor={Wenchang Yang}}
\title{Infinitely many Nielsen classes in mapping class groups and free-group automorphism groups}
\author{Wenchang Yang\thanks{Email: \href{mailto:ywc25@mails.tsinghua.edu.cn}{\texttt{ywc25@mails.tsinghua.edu.cn}}}}
\date{}
\begin{document}
\maketitle
\begin{abstract}
We formulate a root-conjugation criterion for producing generating pairs
and detecting Nielsen inequivalence by a polynomial symmetric-trace invariant. The method
simultaneously treats mapping class groups and automorphism groups of free
groups. We prove that $\Mod(\Sigma_g)$, $g\ge2$, and $\Aut(F_n)$,
$\SAut(F_n)$, $\Out(F_n)$ and $\SOut(F_n)$, $n\ge3$, have countably
infinitely many Nielsen classes of generating pairs. A sign parameter
unifies the full and special automorphism-group constructions, with common generation arguments
and trace polynomials. The same homology representations give the
corresponding results for $\Sp_{2g}(\Z)$, $g\ge2$, and $\SL_n(\Z)$ and
$\GL_n(\Z)$, $n\ge3$. This answers Hirose--Monden's low-genus question
and both parts of Kapovich's Question~8.1.
\end{abstract}
\noindent\textbf{Keywords.} Nielsen equivalence; generating pairs;
mapping class groups; automorphisms of free groups; commutator traces.

\section{Introduction}\label{sec:intro}
An ordered pair $(u,v)$ in a group $G$ is a \emph{generating pair} if
$G=\langle u,v\rangle$. Two pairs are \emph{Nielsen equivalent} if one
can be obtained from the other by finitely many coordinate interchanges,
inversions, and right multiplications of one coordinate by the other.
We construct infinite families of inequivalent generating pairs by a single
mechanism: conjugate a root while retaining a fixed power that recovers
generation, and separate the pairs by a polynomial in a homology trace.
\begin{maintheorem}\label{thm:main}
The following groups have countably infinitely many Nielsen classes of
generating pairs:
\begin{enumerate}
\item $\Mod(\Sigma_g)$ and $\Sp_{2g}(\Z)$ for every $g\ge2$;
\item $\Aut(F_n)$, $\SAut(F_n)$, $\Out(F_n)$, $\SOut(F_n)$,
      $\SL_n(\Z)$ and $\GL_n(\Z)$ for every $n\ge3$.
\end{enumerate}
Here $\Sigma_g$ is a closed connected oriented surface of genus $g$,
$\Mod(\Sigma_g)$ is its orientation-preserving mapping class group,
and $F_n$ is the free group of rank $n$. The special automorphism and
outer automorphism groups are the kernels of the determinant of their
actions on $H_1(F_n;\Z)$.
\end{maintheorem}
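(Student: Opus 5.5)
The plan is to prove all cases with one abstract criterion and then check its hypotheses group by group. The invariant comes from the classical fact that if $(u,v)\sim(u',v')$ are Nielsen equivalent, then $[u',v']$ is conjugate to $[u,v]^{\pm1}$. So for any representation $\rho\colon G\to\GL_N(\Z)$, the number $\tr\rho([u,v])$, symmetrized as $\tr\rho([u,v])+\tr\rho([u,v]^{-1})$, is a Nielsen invariant. For the symplectic and $\SL$ cases the two traces agree, or differ in a controlled way, and the homology representation $H_1$ factors through $\Sp_{2g}(\Z)$, resp.\ $\GL_n(\Z)$. This lets us prove the statements for $\Mod(\Sigma_g)$, $\Aut(F_n)$, etc.\ and for the matrix groups simultaneously. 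Generating pairs of $G$ map to generating pairs of the image, and the image pairs are separated by the same trace. For $\Out$ and $\SOut$ the homology action is defined only up to inner automorphisms, which act trivially on $H_1$.

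\emph{Root-conjugation criterion.} Take a generating pair $(a,b)$ with $a=r^k$ for some element $r$ and $k\ge2$. Choose a family $c_m$ of elements commuting with $a$ but not with $r$. Set $u_m=c_m r c_m^{-1}$, and consider pairs such as $(u_m,\,b)$, or $(u_m, b\,\cdot)$ chosen so that $u_m^k=c_m a c_m^{-1}=a$. Then $\langle u_m,b\rangle\ni a$, and one checks that $\langle a,b\rangle=G$ forces generation, possibly after adjoining $r$ via a second relation. Concretely, I would take $c_m=t^m$ with $t$ in the centralizer of $a$. Then $\tr\rho([t^m r t^{-m},b])$ is a polynomial in the eigenvalue data of $\rho(t)^m$. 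If $\rho(t)$ is unipotent, for example a Dehn twist or a transvection, this is a genuine nonconstant polynomial $P(m)\in\Z[m]$. A nonconstant polynomial takes each value at most $\deg P$ times, so infinitely many $m$ give pairwise distinct invariants and hence pairwise Nielsen-inequivalent pairs. Countability is automatic, since all the groups are finitely generated.

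\emph{Group-specific steps.} For $\Mod(\Sigma_g)$ I would start from known two-element generating sets: Wajnryb's pair, or Korkmaz's pair consisting of a torsion element and a product of Dehn twists. These contain a finite-order rotation whose power or root structure can be exploited. I would then take $t$ to be a Dehn twist about a curve preserved by the relevant power, and compute its action on $H_1(\Sigma_g)$ as a symplectic transvection. For $\Aut(F_n)$ and $\SAut(F_n)$ I would use explicit generating pairs built from a permutation-type automorphism and a Nielsen transvection. A sign parameter $\varepsilon=\pm1$ flips one generator in order to switch between $\Aut$ and $\SAut$. With this, the generation argument and the trace polynomial are computed once, as polynomials in $m$ and $\varepsilon$, and pushed to $\Out$, $\SOut$, $\GL_n(\Z)$ and $\SL_n(\Z)$.

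\emph{Main obstacle.} The hardest step is verifying generation of the conjugated pair $(u_m,b)$ for all $m$: we must recover $r$ itself, or the full original generating set, from $u_m$ and $b$. Since $u_m^k=a$ and $\langle a,b\rangle=G$, the group $\langle u_m,b\rangle$ already contains $a$ and $b$, hence equals $G$. This is the whole point of keeping a fixed power. So the real care lies in choosing $r$ and $t$ with three properties: $t$ commutes with $a=r^k$, $(r^k,b)$ generates, and the trace polynomial in $m$ is genuinely nonconstant in low genus ($g=2$) and rank ($n=3$). I would first try to establish nonconstancy by computing the leading coefficient of $P(m)$ from the $\rho(t)-I$ rank-one term and checking that it is nonzero. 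If that fails, I would instead pick $t$ to be a product of commuting twists.
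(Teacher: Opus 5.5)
You have the right mechanism. A fixed power $r^k$ survives conjugation by an element of its centralizer, and a unipotent homology action makes the symmetrized commutator trace polynomial. But the proposal stops where the proof begins: it exhibits no pair, root or conjugator for any of the groups. Your version of the criterion is also too strong for the free-group cases. You require the fixed power itself to lie in a generating pair, $\langle r^k,b\rangle=G$. The paper only needs $\langle x,r\rangle=G$, $q\in\langle x,r^m\rangle$ and $[q,r^m]=1$, and its free-group constructions rely on this weaker hypothesis. In rank $n\ge4$, the subgroup $\langle x_\delta,R_{12}\rangle$ is only shown to contain the right transvections $R_{ij}$, which suffices for $q=R_{32}$. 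The left transvection $L_{12}$ comes from the word $\sigma$ in $u=x_\delta^{-1}r_\delta x_\delta$, that is, from the root itself. In rank three the fixed power is the involution $z=\alpha^2$. All that is recovered from $\langle h_\delta,z\rangle$ is the conjugator, via the explicit identity $q=WzWz$ that cancels the inner error in $W=[p,v]$. Your phrase ``possibly after adjoining $r$ via a second relation'' is where this argument has to go, and you leave it empty. You also omit the sign-parameter words that make one construction work for both $\Aut(F_n)$ and $\SAut(F_n)$.

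For surfaces, the missing idea is that the fixed power must be a nonseparating Dehn twist $D_g$ with two properties. It lies in a generating pair (Korkmaz's pair $(s_g,D_2)$ conjugated by a handle exchange), and it has a root, $r_g^{2g-1}=D_g$ (Margalit--Schleimer). The conjugator is then the disjoint twist $D_1$. Using the finite-order generator of Korkmaz's pair as the power cannot work. That element has order $4g+2$ and its quotient orbifold is a sphere with three cone points. So its fixed set in Teichm\"uller space is a single point, and its centralizer is finite.

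Nonconstancy is not automatic either; your sentence asserting a ``genuine nonconstant polynomial'' is unjustified. The rank-one leading-term test you propose is also not universal. For surfaces it gives $[t^4]P=2\lambda^2\mu^2$, but only after computing $\lambda=\omega(e_1,Ae_1)=-1$ and $\mu=\omega(e_1,Be_1)=(-1)^g$ from the homology orbit of $e_1$ under the root. For the free-group families the quartic coefficient vanishes; for instance $NBN=0$ with $N=E_{23}$ in rank $\ge4$. There your test is inconclusive, and one must compute the full quadratics $2n-9+4t-4t^2$ and $128t^2+16t$. As it stands, the proposal proves none of the listed cases.
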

Hirose and Monden proved the assertions for $\Mod(\Sigma_g)$ and
$\Sp_{2g}(\Z)$ when $g\ge8$
\cite[Theorem~1.1 and Corollary~1.2]{HM}. Question~4.3 of the 2026 Kyoto
problem collection asks for the remaining genera $2\le g\le7$
\cite[Section~4, p.~7]{Kyoto}. Theorem~\ref{thm:main}(1) answers this
question and extends both assertions to every $g\ge2$.

For $n\ge8$, Kapovich proved the assertions for $\SL_n(\Z)$ and
$\GL_n(\Z)$ \cite[Theorem~A]{Kapovich}, and for $\Out(F_n)$ and
$\SOut(F_n)$ \cite[Theorem~B]{Kapovich}. His generating pairs in
$\Aut(F_n)$ and $\SAut(F_n)$ have inequivalent homology images, so his
construction also gives infinitude for both automorphism groups in this
range \cite[Proposition~4.4 and Theorem~5.5]{Kapovich}.
Theorem~\ref{thm:main}(2) extends all six assertions to every $n\ge3$.
In particular, it answers both parts of Kapovich's Question~8.1 for
every $3\le n\le7$.

Kapovich's construction also uses parameterized conjugation, fixed powers
that recover generators, and commutator traces that distinguish Nielsen
classes. The criterion below isolates a related mechanism adapted to roots
of fixed elements. Our constructions realize this mechanism throughout the
remaining low-rank and low-genus ranges. In rank three, the special base
pair is Nielsen equivalent to Kapovich's pair; the additional ingredient
is the recovery of a conjugator that produces an infinite family
(Section~\ref{subsec:rank3}).

The two principal applications concern mapping class groups and free-group
automorphisms. For surfaces, Korkmaz's
generating pair \cite[Theorem~5]{Korkmaz} and the twist roots of Margalit
and Schleimer \cite[p.~1496]{MS} provide the generation data; homology
produces a quartic symmetric trace with leading coefficient $2$.
For free groups, a sign parameter gives one construction for the full
automorphism group and its special subgroup in ranks at least four.
A second signed construction handles rank three. In each range, both
signs use the same conjugator and the same symmetric trace polynomial.
The rank-three conjugator is recovered by an exact word identity in the
automorphism group: a product cancels the inner error in an intermediate
commutator. Projection then gives the full and special outer assertions
without further generation or trace calculations.
Hirose and Monden also proved that $\Mg$ has infinitely many
T-equivalence classes of generating pairs for $g\ge8$
\cite[Theorem~1.4]{HM}.
Corollary~\ref{cor:T-systems} extends this result to every $g\ge2$
(Corollary~\ref{cor:T-systems}).

The arithmetic groups occur as homological quotients of these two
families. The representations used to separate the original pairs also
separate their quotient images. Thus no independent matrix-generation
argument is needed for $\Sp_{2g}(\Z)$, $\SL_n(\Z)$ or $\GL_n(\Z)$.

For a finite-dimensional representation $\rho:G\to\GL(V)$, define
\begin{equation}\label{eq:Jrho}
J_\rho(u,v)=\tr\rho([u,v])+\tr\rho([u,v]^{-1}),\qquad
[u,v]=uvu^{-1}v^{-1}.
\end{equation}
The commutator of a Nielsen-equivalent pair is conjugate to the original
commutator or its inverse. Hence $J_\rho$ is a Nielsen invariant. The
following criterion combines generation with polynomial separation.

\begin{maintheorem}[Root-conjugation criterion]\label{thm:criterion}
Let $G=\langle x,r\rangle$, let $m\ge2$, and suppose that
\begin{equation}\label{eq:group-hypotheses}
q\in\langle x,r^m\rangle,\qquad [q,r^m]=1.
\end{equation}
Put $r_t=q^trq^{-t}$ for $t\in\Z$.
\begin{enumerate}
\item Every pair $(x,r_t)$ generates $G$.
\item Let $\rho:G\to\GL(V)$ be a finite-dimensional representation over
a field $K$ of characteristic zero, and suppose that $\rho(q)=I+N$, $N^2=0$.
Then $P(t)=J_\rho(x,r_t)$ belongs to $K[t]$ and has degree at most four.
For $A=\rho(x)$ and $B=\rho(r)$, define
\begin{equation}\label{eq:trace-bilinear}
\Phi_A(U,W)=\tr(AUA^{-1}W)+\tr(A^{-1}UAW).
\end{equation}
Its coefficient of $t^4$ is
\[
[t^4]P(t)=\Phi_A(NBN,NB^{-1}N).
\]
If $P$ is nonconstant, the pairs $(x,r_t)$ represent infinitely many
Nielsen classes. In particular, a nonzero quartic coefficient suffices.
\end{enumerate}
\end{maintheorem}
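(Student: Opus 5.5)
Part (1) is a direct generation argument. Since $q$ commutes with $r^m$, we have $r_t^m=q^tr^mq^{-t}=r^m$. Hence $r^m\in\langle x,r_t\rangle$, and because $q\in\langle x,r^m\rangle$ we also get $q\in\langle x,r_t\rangle$. Then $r=q^{-t}r_tq^{t}$ lies in $\langle x,r_t\rangle$, so $\langle x,r_t\rangle\supseteq\langle x,r\rangle=G$. This is where the hypothesis $m\ge2$ is harmless: it is only needed to make the statement nontrivial, and the argument works for any power fixed by $q$.

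For part (2), I would first use $N^2=0$ to get $\rho(q)^t=I+tN$ for all $t\in\Z$, which holds in characteristic zero by the binomial expansion. Thus $\rho(r_t)=B_t:=(I+tN)B(I-tN)=B+t(NB-BN)-tNBN$, which is a polynomial of degree at most $2$ in $t$. Similarly $B_t^{-1}=(I+tN)B^{-1}(I-tN)$. Next, $\tr\rho([x,r_t])=\tr(AB_tA^{-1}B_t^{-1})$ and $\tr\rho([x,r_t]^{-1})=\tr(B_tAB_t^{-1}A^{-1})=\tr(A^{-1}B_tAB_t^{-1})$. Therefore
\[
P(t)=\Phi_A(B_t,B_t^{-1}),
\]
which is bilinear in two matrix polynomials of degree at most $2$. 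Hence $P\in K[t]$ and $\deg P\le4$.

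The $t^4$ coefficient comes only from the product of the quadratic parts. The $t^2$ coefficient of $B_t$ is $-NBN$ and that of $B_t^{-1}$ is $-NB^{-1}N$. The two signs cancel, giving $[t^4]P=\Phi_A(NBN,NB^{-1}N)$. This coefficient extraction is the only place where care is needed: one has to write $B_t$ and $B_t^{-1}$ explicitly and check that no other cross terms reach degree $4$. No other cross terms reach degree $4$, since each factor has degree at most $2$.

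Finally, $J_\rho$ is a Nielsen invariant, as noted before the statement. If $P$ is nonconstant, it takes each value only finitely often, by at most $\deg P\le4$ roots for each value. So the infinitely many integers $t$ give infinitely many distinct values of $J_\rho(x,r_t)$, and hence infinitely many Nielsen classes of the generating pairs $(x,r_t)$ from part (1). A nonzero quartic coefficient makes $P$ nonconstant. The set of Nielsen classes is at most countable because $G$ is countable, being finitely generated, although the statement only asks for infinitely many.
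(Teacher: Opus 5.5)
Your proposal is correct and follows the paper's proof essentially step for step. It uses the same generation argument via $r_t^m=r^m$, the same expansion $B_t=(I+tN)B(I-tN)$ reducing $P$ to $\Phi_A(B_t,B_t^{-1})$, and the same extraction of the $t^4$ coefficient from the quadratic parts.

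There are two small slips. First, the quadratic term of $B_t$ should read $-t^2NBN$, not $-tNBN$; your later coefficient statement already assumes the correct form. Second, characteristic zero is not what makes $(I+N)^t=I+tN$ hold, since that identity is valid over any field. It is needed in the last step, where it guarantees that the integers $t$ are distinct elements of $K$, so a nonconstant polynomial takes infinitely many values on them.
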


Section~\ref{sec:criterion} proves the criterion through two lemmas.
Sections~\ref{sec:surface} and~\ref{sec:free} implement it for surfaces
and free groups, respectively, and deduce the arithmetic consequences
from the same homology calculations.

\noindent\textbf{Conventions.}
Products act on the rightmost factor first, vectors are columns, and
all Dehn twists are right twists. We write $E_{ij}$ for a matrix unit,
$I_k$ for the identity matrix, and $[t^d]P$ for a polynomial coefficient;
empty identity blocks are omitted. Curves and homeomorphisms also denote
their isotopy classes. The intersection sign is fixed in
\eqref{eq:twist-action}.

\section{The root-conjugation mechanism}\label{sec:criterion}
Generation is recovered from a fixed power; separation is checked in a
representation. We prove these two statements separately.

\begin{lemma}[Generation]\label{lem:generation}
Let $G=\langle x,r\rangle$ and let $m\ge2$. If
$q\in\langle x,r^m\rangle$ and $[q,r^m]=1$, then
\[
\langle x,q^trq^{-t}\rangle=G\qquad\text{for every }t\in\Z.
\]
\end{lemma}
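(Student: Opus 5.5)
Write $H_t=\langle x,r_t\rangle$ with $r_t=q^trq^{-t}$. The plan is to show that $H_t$ contains $r^m$ and then $q$; once $q\in H_t$, it contains $r=q^{-t}r_tq^{t}$, so $H_t\supseteq\langle x,r\rangle=G$. The reverse inclusion is trivial.

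First, since $q$ commutes with $r^m$, we get
\[
r_t^m=q^tr^mq^{-t}=r^m .
\]
Hence $r^m\in H_t$ for every $t$, and so $\langle x,r^m\rangle\subseteq H_t$. By hypothesis $q\in\langle x,r^m\rangle$, so $q\in H_t$. Therefore $q^{\pm t}\in H_t$, and $r=q^{-t}r_tq^t\in H_t$. This completes the argument. The hypothesis $m\ge2$ is not needed for generation; it only prevents the conclusion from being vacuous.

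There is no real obstacle. The one point to verify is the identity $r_t^m=q^tr^mq^{-t}$: conjugation is a homomorphism, so the $m$-th power of a conjugate is the conjugate of the $m$-th power. Together with $[q,r^m]=1$, this gives the $t$-independent element $r^m$ inside each $H_t$.
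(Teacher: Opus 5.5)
Your proof is correct and is the paper's argument: $r_t^m=q^tr^mq^{-t}=r^m$ puts $\langle x,r^m\rangle$, and hence $q$, inside $H_t$, so $r=q^{-t}r_tq^t\in H_t$. Your side remark is also right that $m\ge2$ plays no role in generation, though for $m=1$ the statement is trivial rather than vacuous, since then $r_t=r$.
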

\begin{proof}
Put $r_t=q^trq^{-t}$ and $H_t=\langle x,r_t\rangle$.
Since $q$ commutes with $r^m$,
\[
r_t^m=q^tr^mq^{-t}=r^m.
\]
Hence $\langle x,r^m\rangle\subseteq H_t$, so $q\in H_t$.
It follows that $r=q^{-t}r_tq^t\in H_t$. Thus $H_t$ contains
both generators of $G$, and $H_t=G$.
\end{proof}

\begin{lemma}[Separation]\label{lem:trace}
Let $\rho:G\to\GL(V)$ be a finite-dimensional representation over a
field $K$ of characteristic zero.
\begin{enumerate}
\item The function $J_\rho$ is invariant under Nielsen moves.
Any family of ordered pairs on which $J_\rho$ takes infinitely many
values contains infinitely many Nielsen classes.

\item Suppose that $\rho(q)=I+N$ with $N^2=0$.
For $x,r\in G$, put $A=\rho(x)$, $B=\rho(r)$, and use
$\Phi_A$ from \eqref{eq:trace-bilinear}. Set
\begin{equation}\label{eq:squarezero-expansion}
\begin{aligned}
U(t)&=B+t(NB-BN)-t^2NBN,\\
W(t)&=B^{-1}+t(NB^{-1}-B^{-1}N)-t^2NB^{-1}N.
\end{aligned}
\end{equation}
Then
\begin{equation}\label{eq:trace-polynomial}
J_\rho(x,q^trq^{-t})=P(t):=\Phi_A(U(t),W(t)),\qquad \deg P\le4,
\end{equation}
with
\begin{equation}\label{eq:quartic-test}
[t^4]P(t)=\Phi_A(NBN,NB^{-1}N).
\end{equation}
\end{enumerate}
\end{lemma}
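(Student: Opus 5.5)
For part (1) I would show directly that $J_\rho$ is unchanged by each elementary Nielsen move, since Nielsen equivalence is generated by these moves. Write $c=[u,v]$. Interchanging the coordinates gives $[v,u]=c^{-1}$. Inverting the first coordinate gives $[u^{-1},v]=u^{-1}(v u v^{-1}u^{-1})u=u^{-1}c^{-1}u$, and inverting the second coordinate similarly gives a conjugate of $c^{-1}$. For the move $(u,v)\mapsto(uv,v)$ we have $[uv,v]=uv\,v\,v^{-1}u^{-1}v^{-1}=uvu^{-1}v^{-1}=c$. For the move $(u,v)\mapsto(u,vu)$ we get $[u,vu]=u\,vu\,u^{-1}u^{-1}v^{-1}=u(vu^{-1}v^{-1})=\dots$; conjugating by $u^{-1}$ turns this into $vu^{-1}v^{-1}u=c^{-1}$ up to conjugacy. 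So each move replaces $c$ by a conjugate of $c$ or of $c^{-1}$. The sum $\tr\rho(c)+\tr\rho(c^{-1})$ is invariant under conjugation and under $c\mapsto c^{-1}$, so $J_\rho$ is a Nielsen invariant. Consequently pairs in the same Nielsen class have the same $J_\rho$ value, and if $J_\rho$ takes infinitely many values on a family, that family meets infinitely many classes.

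For part (2), set $Q=\rho(q)=I+N$. Since $N^2=0$, we have $Q^t=I+tN$ and $Q^{-t}=I-tN$ for every $t\in\Z$. Then
\[
\rho(q^trq^{-t})=(I+tN)B(I-tN)=B+t(NB-BN)-t^2NBN=U(t),
\]
and likewise $\rho(q^tr^{-1}q^{-t})=W(t)$; this $W(t)$ is indeed $U(t)^{-1}$.

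Next I compute the commutator. We have $\rho([x,r_t])=AUA^{-1}W$ and $\rho([x,r_t]^{-1})=r_tx\,r_t^{-1}x^{-1}\mapsto UAWA^{-1}$. By cyclicity of the trace, $\tr(UAWA^{-1})=\tr(A^{-1}UAW)$. Summing the two traces gives $\Phi_A(U,W)$, which proves \eqref{eq:trace-polynomial}.

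Since $U$ and $W$ are polynomials of degree at most $2$ in $t$ and $\Phi_A$ is bilinear, $P$ is a polynomial of degree at most $4$. Its $t^4$ coefficient comes only from the product of the two $t^2$ terms, giving $\Phi_A(-NBN,-NB^{-1}N)=\Phi_A(NBN,NB^{-1}N)$.

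I expect no real obstacle here. The only delicate point is the sign and ordering bookkeeping for the inverse commutator and for the move $(u,v)\mapsto(u,vu)$, where it is easy to misidentify which power of $c$ appears. I would handle this by verifying each move explicitly as above.
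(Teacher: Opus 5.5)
Your proposal is correct and follows the paper's proof essentially step for step: the same commutator identities for the elementary Nielsen moves, then $(I+N)^{\pm t}=I\pm tN$, cyclicity of trace to identify the symmetric sum with $\Phi_A(U(t),W(t))$, and bilinearity for the degree and $t^4$ coefficient. One harmless slip: $[u,vu]=uvu^{-1}v^{-1}=c$ exactly, as your own intermediate expression $u(vu^{-1}v^{-1})$ already shows. Its conjugate $vu^{-1}v^{-1}u=u^{-1}cu$ is therefore a conjugate of $c$, not of $c^{-1}$ (these differ in general, e.g.\ in a free group). This does not affect your conclusion, since $J_\rho$ is invariant under both conjugation and inversion.
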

\begin{proof}
For $c=[u,v]$, direct multiplication gives
\[
\begin{gathered}
[v,u]=c^{-1},\qquad [u^{-1},v]=u^{-1}c^{-1}u,\qquad
[u,v^{-1}]=v^{-1}c^{-1}v,\\
[uv,v]=c,\qquad [u,vu]=c.
\end{gathered}
\]
These identities cover the elementary Nielsen moves.
Trace is unchanged by conjugation, and the symmetric sum in
\eqref{eq:Jrho} is unchanged by inversion. Thus $J_\rho$ is constant
on each Nielsen class, proving part~(1).

Since $N^2=0$, for every integer $t$ we have
$(I+N)^t=I+tN$ and $(I+N)^{-t}=I-tN$.
Writing $B_t=\rho(q^trq^{-t})$, multiplication gives
\[
B_t=(I+tN)B(I-tN)=U(t),\qquad
B_t^{-1}=(I+tN)B^{-1}(I-tN)=W(t).
\]
Cyclic invariance of trace now gives
\[
\begin{aligned}
J_\rho(x,q^trq^{-t})
&=\tr(AB_tA^{-1}B_t^{-1})
  +\tr(B_tAB_t^{-1}A^{-1})\\
&=\Phi_A(B_t,B_t^{-1})=\Phi_A(U(t),W(t)).
\end{aligned}
\]
Both $U$ and $W$ have degree at most two. Bilinearity gives
$\deg P\le4$, and their quadratic terms contribute
$\Phi_A(-NBN,-NB^{-1}N)=\Phi_A(NBN,NB^{-1}N)$ to $t^4$.
This proves part~(2).
\end{proof}

\begin{remark}[Finite differences]\label{rem:differences}
More generally, for $Q=\rho(q)$ and $\Delta(Z)=QZQ^{-1}-Z$, the
conditions $\Delta^{a+1}(B)=\Delta^{b+1}(B^{-1})=0$ give a trace
polynomial of degree at most $a+b$, by substituting the finite binomial
expansions of $(I+\Delta)^tB$ and $(I+\Delta)^tB^{-1}$ into
$\Phi_A$. We use only the square-zero case above.
\end{remark}

\begin{proof}[Proof of Theorem~\ref{thm:criterion}]
Lemma~\ref{lem:generation} proves part~(1).
For part~(2), a nonconstant polynomial $P$ takes infinitely many
values on the nonnegative integers: characteristic zero makes these
integers distinct in $K$, and each value has at most $\deg P$ preimages.
Lemma~\ref{lem:trace}(1) therefore separates infinitely many of the
pairs, all of which generate $G$ by part~(1).
The polynomial formula and its quartic coefficient follow from
Lemma~\ref{lem:trace}(2).
\end{proof}

\begin{remark}[Homological quotients]\label{rem:quotients}
If $\pi:G\twoheadrightarrow H$ and $\rho=\sigma\circ\pi$, then
generating pairs map to generating pairs and
$J_\rho(u,v)=J_\sigma(\pi(u),\pi(v))$. A nonconstant trace polynomial
therefore proves infinitude in both groups. This factorization is used
for every arithmetic consequence below.
\end{remark}

\section{Mapping class groups and their symplectic quotients}\label{sec:surface}
Here the fixed power is a nonseparating Dehn twist.
Fix $g\ge2$ and write $\Mg=\Mod(\Sigma_g)$.

Realize $\Sigma_g$ as the double of a planar surface $\mathcal P_g$
with outer boundary $B_0$ and $g$ holes $B_1,\ldots,B_g$ arranged in a
row. Let $a_1$ join $B_0$ to the left side of $B_1$, let $a_i$ for
$i\ge2$ join $B_0$ to the top of $B_i$, and let $h_i$ join the adjacent
holes $B_{i+1}$ and $B_i$, as in Figure~\ref{fig:planar}.
Let $\alpha_i,\gamma_i$ be the doubles of $a_i,h_i$, and let $\beta_i$
be the seam $B_i$.
\begin{figure}[htbp]
\centering
\begin{tikzpicture}[x=.72cm,y=.72cm]
\draw[rounded corners=5pt] (0,-2) rectangle (13,2);
\foreach \x/\i in {2/1,5/2,8/3,11/4}{
\draw (\x,0) circle (.5); \node at (\x,-.9) {$B_{\i}$};}
\draw[thick] (0,0)--(1.5,0); \node at (.7,.35) {$a_1$};
\foreach \x/\i in {5/2,8/3,11/4}{
\draw[thick] (\x,.5)--(\x,2); \node[right] at (\x,1.3) {$a_{\i}$};}
\draw[thick] (2.5,0)--(4.5,0) node[midway,above] {$h_1$};
\draw[thick] (5.5,0)--(7.5,0) node[midway,above] {$h_2$};
\draw[thick] (8.5,0)--(10.5,0) node[midway,above] {$h_3$};
\node at (12,-1.55) {$\mathcal P_4$};
\end{tikzpicture}
\caption{One copy of the planar surface; every boundary is glued to the other copy.}
\label{fig:planar}
\end{figure}
The chain and its twists are
\begin{equation}\label{eq:chain}
c_1=\alpha_1,\quad c_{2i}=\beta_i\ (1\le i\le g),\quad
c_{2i+1}=\gamma_i\ (1\le i<g),\qquad A_j=T_{c_j},\quad D_i=T_{\alpha_i}.
\end{equation}
Adjacent chain curves meet once; all other pairs are disjoint.
In particular, $\alpha_g$ is disjoint from the odd chain
$c_1,\ldots,c_{2g-1}$. Fix the intersection sign by
\begin{equation}\label{eq:twist-action}
(T_c)_*(w)=w+\omega([c],w)[c].
\end{equation}
Orient the curves to obtain the integral symplectic basis
\begin{equation}\label{eq:surface-basis}
e_i=[\alpha_i],\quad f_i=[\beta_i],\quad
\omega(e_i,f_j)=\delta_{ij},\quad
\omega(e_i,e_j)=\omega(f_i,f_j)=0,\quad [\gamma_i]=e_i-e_{i+1}.
\end{equation}
To justify the identity $[\gamma_i]=e_i-e_{i+1}$ in
\eqref{eq:surface-basis}, orient
$a_i$ from $B_0$ to $B_i$ and $h_i$ from
$B_{i+1}$ to $B_i$. The boundary map from relative $H_1(\mathcal P_g,
\partial\mathcal P_g;\Z)$ to $H_0(\partial\mathcal P_g;\Z)$ is injective,
since boundary circles generate $H_1(\mathcal P_g;\Z)$. Thus
$[h_i]=[a_i-a_{i+1}]$ in relative homology; doubling gives the identity.
The intersection matrix in \eqref{eq:surface-basis} is unimodular.

\begin{lemma}\label{lem:root}
For the right twists in \eqref{eq:chain},
\begin{equation}\label{eq:surface-root}
k_g=A_1^2A_2\cdots A_{2g-1},\quad k_g^{2g-1}=D_g^2,\qquad
r_g=k_g^gD_g^{-1}=k_g^{1-g}D_g,\quad r_g^{2g-1}=D_g.
\end{equation}
\end{lemma}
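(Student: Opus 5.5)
The plan is to reduce everything to the odd-chain relation and then do formal algebra in the abelian subgroup $\langle k_g,D_g\rangle$.

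\emph{Step 1: the chain relation.} The curves $c_1,\ldots,c_{2g-1}$ form a chain of odd length $k=2g-1$: adjacent curves meet once and all other pairs are disjoint, by the remark after \eqref{eq:chain}. For such a chain, let $d,d'$ be the two boundary curves of a closed regular neighborhood $S$ of $c_1\cup\cdots\cup c_k$. I will invoke the standard chain relation (Farb--Margalit, Proposition~4.12) in its form for odd $k$:
\[
(T_{c_1}^2T_{c_2}\cdots T_{c_k})^{k}=T_dT_{d'},
\]
which gives $k_g^{2g-1}=T_dT_{d'}$. The orientation conventions are those of \eqref{eq:twist-action}, with all twists right twists. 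The positive-twist form of the relation is orientation-independent, so no sign issue arises here.

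\emph{Step 2: identifying $d$ and $d'$ with $\alpha_g$.} This is the only geometric point and the main thing to check.

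First, $S$ has genus $g-1$ and two boundary components, so $\chi(S)=2-2(g-1)-2=2-2g=\chi(\Sigma_g)$. Hence each component of $\Sigma_g\setminus \mathrm{int}\,S$ has $\chi=0$.

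Next, the two boundary curves cannot each bound a disc. That would make $S$ together with the discs a closed surface of genus $g-1<g$ equal to $\Sigma_g$, which is impossible. If $d$ and $d'$ bounded separate annuli, those annuli would close up into a torus component, which is also impossible. So the complement is a single annulus, and $d$ and $d'$ are isotopic to its core.

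Now $\alpha_g$ is disjoint from every $c_j$ with $j\le 2g-1$ (noted after \eqref{eq:chain}), so it can be isotoped into the complementary annulus. It is essential there because $[\alpha_g]=e_g\neq0$. Every essential simple closed curve in an annulus is its core. Therefore $d\simeq d'\simeq\alpha_g$, so $T_dT_{d'}=D_g^2$ and $k_g^{2g-1}=D_g^2$.

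Alternatively, one can avoid the Euler-characteristic argument by tracing the double of $\mathcal P_g$ directly. The neighborhood of the odd chain is the double of $\mathcal P_g$ cut along $a_g$, so its two boundary curves are the two push-offs of $\alpha_g$.

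\emph{Step 3: algebra.} Every chain curve $c_j$ with $j\le 2g-1$ is disjoint from $\alpha_g$, so $D_g$ commutes with $k_g$. We then compute in the abelian group $\langle k_g,D_g\rangle$. From $k_g^{1-2g}=D_g^{-2}$ we get
\[
k_g^{1-g}D_g=k_g^{g}k_g^{1-2g}D_g=k_g^gD_g^{-1}.
\]
Finally,
\[
r_g^{2g-1}=k_g^{g(2g-1)}D_g^{-(2g-1)}=D_g^{2g}D_g^{1-2g}=D_g.
\]
I expect Step 2, pinning down the boundary multitwist as $D_g^2$, to be the only step needing care. The rest is formal once the chain relation is quoted.
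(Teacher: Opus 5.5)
\textbf{There is a genuine gap in Step~2.} Your overall route is the paper's: odd-chain relation, identify both boundary curves with $\alpha_g$, then algebra in the abelian group $\langle k_g,D_g\rangle$. Steps~1 and~3 are fine. The paper handles the twist-handedness issue by conjugating the Margalit--Schleimer relation by the orientation-reversing involution of the doubled model. That is the concrete form of your remark about orientation.

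From $\chi(S)=\chi(\Sigma_g)$ you only get that the complement has \emph{total} Euler characteristic $0$. It does not follow that each component has $\chi=0$. The complement has exactly the two boundary circles $d,d'$, and every component meets $\partial S$. So either it is connected, in which case $\chi=0$ with two boundary circles forces an annulus, or it has two components with one boundary circle each. In the second case the Euler characteristics $1-2h_1$ and $1-2h_2$ sum to $0$, so $d$ bounds a disc and $d'$ bounds a one-holed torus. Your case analysis (two discs, ``separate annuli'') never addresses this case. Your genus count cannot exclude it either, since $(g-1)+1=g$ is consistent. Such odd chains really exist in $\Sigma_g$: glue a disc and a one-holed torus to the two boundary circles of $S$. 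So no argument using only the topology of $S$ and $\chi$ can work; you need input specific to this chain.

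The missing input is homological, and it is exactly what the paper uses. By \eqref{eq:surface-basis}, the $2g-1$ chain classes $e_1,f_1,e_1-e_2,\ldots,f_{g-1},e_{g-1}-e_g$ are linearly independent. They generate $H_1(S;\Z)\cong\Z^{2g-1}$, so $H_1(S;\Z)\to H_1(\Sigma_g;\Z)$ is injective. A boundary curve of $S$ is nonzero in $H_1(S;\Z)$, hence nonzero in $H_1(\Sigma_g;\Z)$, so it cannot bound a disc. This rules out the disconnected case and leaves the annulus.

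Your alternative, identifying the neighborhood with $\Sigma_g$ cut along $\alpha_g$, is a true statement. As written, though, it is only asserted, and it is equivalent to the claim being proved, so it would need its own verification. Once the homological argument is inserted, the rest of your proof goes through.
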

\begin{proof}
A regular neighborhood $U$ of the odd chain has genus $g-1$ and two
boundary components. Its $2g-1$ chain classes are independent in
$H_1(\Sigma_g;\Z)$ by \eqref{eq:surface-basis}, so $H_1(U;\Z)$ injects
into $H_1(\Sigma_g;\Z)$. Its complement has Euler characteristic zero
and two boundary components. A disconnected complement would contain
a disk, killing a nonzero boundary class of $U$. Hence the complement
is an annulus. Since $\alpha_g$ lies in this annular complement and
represents a nonzero homology class, it is essential in the annulus.
Hence it is isotopic to either boundary component of $U$.

Writing $\widetilde T_c$ for a left twist, the odd-chain relation of
\cite[p.~1496]{MS}, with $d_1,d_2$ the boundaries of $U$, is
\begin{equation}\label{eq:left-chain-relation}
(\widetilde T_{c_1}^{\,2}\widetilde T_{c_2}\cdots
\widetilde T_{c_{2g-1}})^{2g-1}
=\widetilde T_{d_1}\widetilde T_{d_2}.
\end{equation}
The involution interchanging the two planar copies reverses orientation
and preserves every chain curve and $\alpha_g$. Conjugating
\eqref{eq:left-chain-relation} by this involution changes left twists to right twists in the
same factor order, giving $k_g^{2g-1}=D_g^2$.
Disjointness gives $[k_g,D_g]=1$, so the two root words agree and
$(k_g^gD_g^{-1})^{2g-1}=D_g^{2g}D_g^{-(2g-1)}=D_g$.
\end{proof}

\begin{proposition}\label{prop:surface-data}
There are $x_g,r_g,q_g\in\Mg$ such that
\begin{equation}\label{eq:surface-data}
\langle x_g,D_g\rangle=\langle x_g,r_g\rangle=\Mg,\qquad
r_g^{2g-1}=D_g,\qquad q_g=D_1,\qquad [q_g,D_g]=1.
\end{equation}
One may take $r_g$ from Lemma~\ref{lem:root} and
$x_g=\tau_gs_g\tau_g^{-1}$, where $s_g=A_{2g}\cdots A_1$ and
$\tau_g$ exchanges the oriented marked handles
$(\alpha_2,\beta_2)$ and $(\alpha_g,\beta_g)$, fixing the others;
$\tau_2=1$.
\end{proposition}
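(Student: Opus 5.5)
The plan is to assemble the four assertions of \eqref{eq:surface-data} from Lemma~\ref{lem:root}, disjointness of curves, and Korkmaz's two-element generating set. Throughout I take $r_g=k_g^{1-g}D_g$ from Lemma~\ref{lem:root} and $q_g=D_1=T_{\alpha_1}$.

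The easy assertions come first. The identity $r_g^{2g-1}=D_g$ is exactly \eqref{eq:surface-root}. For $[q_g,D_g]=1$, note that $\alpha_1$ and $\alpha_g$ are doubles of the arcs $a_1$ and $a_g$. For $g\ge2$ these arcs are disjoint in $\mathcal P_g$, since $a_1$ meets $B_1$ and $a_g$ meets $B_g$, and they can be drawn disjointly from $B_0$. Hence the two curves are disjoint and their twists commute. The equality $\langle x_g,r_g\rangle=\langle x_g,D_g\rangle$ also follows quickly. The inclusion $D_g=r_g^{2g-1}\in\langle x_g,r_g\rangle$ gives one direction. For the other, $D_g$ commutes with $k_g$ by the disjointness used in Lemma~\ref{lem:root}, and $k_g$ is a product of $A_1,\dots,A_{2g-1}$, so it suffices to know $k_g\in\langle x_g,D_g\rangle$. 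This will follow once $\langle x_g,D_g\rangle=\Mg$, which makes all three groups equal to $\Mg$.

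The substantive point is therefore $\langle x_g,D_g\rangle=\Mg$. First I need $\tau_g$ to exist. The two oriented handle pairs $(\alpha_2,\beta_2)$ and $(\alpha_g,\beta_g)$ each consist of curves meeting once, with complementary handle data of the same topological type. By the change-of-coordinates principle there is an orientation-preserving homeomorphism swapping them, which I choose to fix the other marked handles. When $g=2$ the two handles coincide and $\tau_2=1$. Conjugating by $\tau_g^{-1}$ then gives
\[
\tau_g^{-1}\langle x_g,D_g\rangle\tau_g=\langle s_g,T_{\tau_g^{-1}(\alpha_g)}\rangle=\langle s_g,D_2\rangle .
\]
So it suffices to show that $s_g=A_{2g}\cdots A_1$ and the twist $D_2=T_{\alpha_2}$ generate $\Mg$. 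This is where I invoke \cite[Theorem~5]{Korkmaz}. That theorem says the chain product of the $2g$ chain twists, a torsion element of order $4g+2$, together with a twist about a suitable nonseparating curve generates $\Mg$.

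The main obstacle is matching Korkmaz's normalization to the present one. This involves three things: his chain and factor order versus $s_g=A_{2g}\cdots A_1$, his left or right twist convention, and his choice of extra curve, which should be $\alpha_2$ or its image under a symmetry. I would handle the conventions as in the proof of Lemma~\ref{lem:root}. Conjugating by the orientation-reversing involution exchanging the two planar copies converts left twists to right twists while preserving every chain curve and $\alpha_2$. Reversing the factor order corresponds to passing to inverses and conjugating, which does not change the generated subgroup. For the extra curve, I first try to identify Korkmaz's curve directly with $\alpha_2$. This should work because $\alpha_2$ meets $c_4=\beta_2$ once and is disjoint from the other chain curves, matching his configuration. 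Failing that, I conjugate by a power of $s_g$, which permutes the chain curves cyclically, or by a mapping class preserving the chain. Either choice carries his curve to $\alpha_2$ without changing $s_g$ up to its generated subgroup.
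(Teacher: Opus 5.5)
Your proposal is correct and follows essentially the paper's own route: Korkmaz's pair gives $\langle s_g,D_2\rangle=\Mg$ with his extra curve $b$ identified directly as $\alpha_2$, conjugation by the handle interchange $\tau_g$ then gives $\langle x_g,D_g\rangle=\Mg$, and $D_g=r_g^{2g-1}$ from Lemma~\ref{lem:root} together with the disjointness of $\alpha_1$ and $\alpha_g$ finishes the argument. Two small corrections: the paper reads Korkmaz's Theorem~5 as a statement about the once-bordered surface (where $S$ has infinite order), so $\Mg$ is reached through the boundary-capping surjection; and your fallback of conjugating by powers of $s_g$ is unnecessary and also inaccurate, since $s_g$ shifts the chain and carries an end curve off it rather than cyclically permuting the $2g$ chain curves.
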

\begin{proof}
The correspondence with \cite[Figure~1]{Korkmaz} is
\[
\begin{array}{c|ccc}
\text{Korkmaz} & a_j & b & S\\ \hline
\text{here} & c_j & \alpha_2 & A_{2g}\cdots A_1
\end{array}
\]
In the doubled planar model, the seam curves alternate with the doubles
of $a_1,h_1,\ldots,h_{g-1}$ to form the ordered chain in that figure.
The extra curve $b$ is the double of the arc $a_2$, namely $\alpha_2$.
Korkmaz defines $S$ as the product of right twists in decreasing chain
order, so it corresponds to $s_g=A_{2g}\cdots A_1$, with the same
composition convention. Theorem~5 of that paper gives
$\langle S,T_b\rangle$ as the mapping class group of the once-bordered
surface. Filling its boundary induces a surjective homomorphism onto
$\Mg$ and sends these generators to $s_g,D_2$; hence
$\langle s_g,D_2\rangle=\Mg$.
The handle interchange extends over the complementary sphere with
$g$ boundary components. Conjugating gives
$\langle x_g,D_g\rangle=\Mg$.
Lemma~\ref{lem:root} proves the root assertion and hence
$\langle x_g,r_g\rangle=\Mg$. Finally $\alpha_1$ and $\alpha_g$ are
disjoint, so $[D_1,D_g]=1$.
\end{proof}

We now use the integral homology action
$\rho_g:\Mg\to\Sp_{2g}(\Z)$, also viewed on $H_1(\Sigma_g;\Q)$.
\begin{lemma}\label{lem:pairings}
For $A=\rho_g(x_g)$ and $B=\rho_g(r_g)$,
\begin{equation}\label{eq:pairings}
\omega(e_1,Ae_1)=-1,\qquad \omega(e_1,Be_1)=(-1)^g.
\end{equation}
\end{lemma}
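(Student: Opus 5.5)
The plan is to evaluate both pairings by tracking a single vector through products of transvections, using \eqref{eq:twist-action} and the basis \eqref{eq:surface-basis}. Two observations reduce the work. For $A$: $\tau_g$ fixes the marked handle $(\alpha_1,\beta_1)$, since $\tau_g$ only moves handles $2$ and $g$ (and $\tau_2=1$). Hence $\rho_g(\tau_g)^{\pm1}e_1=e_1$. Because $\rho_g(\tau_g)$ is symplectic,
\[
\omega(e_1,Ae_1)=\omega\bigl(\rho_g(\tau_g)^{-1}e_1,\rho_g(s_g)\rho_g(\tau_g)^{-1}e_1\bigr)=\omega(e_1,\rho_g(s_g)e_1).
\]
For $B$: by Lemma~\ref{lem:root}, $r_g=k_g^gD_g^{-1}$. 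Since $\omega(e_g,e_1)=0$, the twist $D_g$ fixes $e_1$, so $Be_1=\rho_g(k_g)^ge_1$.

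For $s_g=A_{2g}\cdots A_1$, apply the twists right to left, starting from $e_1$. The steps are:
\begin{itemize}
\item $A_1$ fixes $e_1$.
\item $A_2=T_{\beta_1}$ gives $e_1-f_1$.
\item $A_3=T_{\gamma_1}$ gives $e_2-f_1$, since $\omega(e_1-e_2,e_1-f_1)=-1$.
\item $A_4$ gives $e_2-f_1-f_2$.
\end{itemize}
By induction on $i$, after $A_{2i}$ the vector is $e_i-f_1-\cdots-f_i$, and $A_{2i+1}$ replaces $e_i$ by $e_{i+1}$. I will state this as a short inductive claim and verify the one pairing needed at each step. The end result is $\rho_g(s_g)e_1=e_g-f_1-\cdots-f_g$. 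Pairing with $e_1$ picks out only the $-f_1$ term, giving $-1$.

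For $B$, I would first compute the matrix of $K=\rho_g(k_g)$ on the span of $e_1,\dots,e_g,f_1,\dots,f_{g-1}$. This span contains all odd-chain classes, and $K$ fixes $e_g$ because $\alpha_g$ is disjoint from the odd chain. Then I iterate $K$ on $e_1$ $g$ times. Again I would apply the twists right to left, $A_{2g-1}$ first and $A_1^2$ last. As above, each application moves the vector one step along the chain. I expect $Ke_1$ to be $\pm$ a chain-class combination whose $f_1$-coefficient can be tracked. The quantity needed is only the $f_1$-coefficient of $K^ge_1$, since $\omega(e_1,v)$ equals that coefficient.

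A helpful check is the relation $K^{2g-1}=\rho_g(D_g)^2$ from Lemma~\ref{lem:root}. This says $K$ acts on the quotient of the span by $\langle e_g\rangle$ as an element of order dividing $2g-1$. I expect $K$ there to be a Coxeter-type element of the chain whose iterates cycle through signed chain classes. If so, $K^ge_1$ should be a single signed class, or a class plus a multiple of $e_g$, with $f_1$-coefficient $(-1)^g$.

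The main obstacle is this iteration: finding the correct closed form for $K^je_1$ for $0\le j\le g$ and proving it by induction on $j$. I would first compute the cases $g=2$ and $g=3$ explicitly to guess the pattern. Then I would prove the general formula for $K$ applied to each relevant chain class, namely $K([c_j])$ for odd-chain classes, and read off $K^g e_1$ from it. The sign $(-1)^g$ should come from an alternation as the orbit passes alternately through $e$-type and $f$-type classes.
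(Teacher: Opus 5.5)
You handle $\omega(e_1,Ae_1)=-1$ correctly, and essentially as the paper does. The paper computes $Ae_1=\Pi_gSe_1=e_2-F_g$ with $F_j=f_1+\cdots+f_j$, while you remove $\tau_g$ by symplectic invariance; both rest on $Se_1=e_g-F_g$. Your reductions $Be_1=K^ge_1$, and that $\omega(e_1,v)$ is the $f_1$-coefficient of $v$, are also correct.

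The gap is the second identity, which is the substantive half of the lemma. You never determine $K^ge_1$; you only predict its $f_1$-coefficient. The missing step is the action of $K=\rho_g(k_g)$ on the basis, applying the factors in decreasing chain order:
\begin{itemize}
\item For $1\le j<g$, the twists $T_{\beta_j},T_{\gamma_{j-1}},\dots,T_{\gamma_1},T_{\beta_1}$ carry $e_j$ to $e_1-F_j$, and the final $T_{\alpha_1}^2$ turns $e_1$ into $-e_1$. So $Ke_j=-e_1-F_j$.
\item Only $T_{\gamma_j}$ and $T_{\beta_j}$ act on $f_j$, giving $Kf_j=e_j-e_{j+1}$ and hence $KF_j=e_1-e_{j+1}$.
\item Finally, $Ke_g=e_g$.
\end{itemize}
An alternating induction then gives
\[
K^{2j-1}e_1=-e_j-F_j,\qquad K^{2j}e_1=e_{j+1}+F_j\qquad(1\le j\le g-1).
\]
Since $g\le 2g-2$, taking $j=g/2$ or $j=(g+1)/2$ yields $f_1$-coefficient $(-1)^g$.

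The shape you predict is also wrong beyond $g=2$. For $g=3$ one gets $K^3e_1=-e_2-f_1-f_2$, which is neither a signed chain class nor one plus a multiple of $e_3$. So the Coxeter-element heuristic would not lead you to the right closed form. Likewise, the relation $K^{2g-1}=\rho_g(D_g)^2$ is only a consistency check; in fact $\rho_g(D_g)$ is trivial on your span, so $K^{2g-1}$ is the identity there. It gives no way to pin down the coefficient of $K^ge_1$.
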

\begin{proof}
Put $S=\rho_g(s_g)$, $L=\rho_g(k_g)$, and $F_j=\sum_{i=1}^j f_i$.
The homology map $\Pi_g$ of $\tau_g$ exchanges the second and $g$-th
basis pairs and fixes $e_1$.
Applying the factors of $s_g$ in increasing chain order takes $e_1$
to $e_j-F_j$ after $T_{\beta_j}$, and finally to $e_g-F_g$. Thus
\begin{equation}\label{eq:A-vector}
Ae_1=\Pi_gSe_1=e_2-F_g.
\end{equation}
For $1\le j<g$, applying the factors of $k_g$ in decreasing chain
order gives
\begin{equation}\label{eq:L-action}
Le_j=-e_1-F_j,\qquad Lf_j=e_j-e_{j+1},\qquad LF_j=e_1-e_{j+1}.
\end{equation}
Indeed, before $T_{\alpha_1}^2$, the vector $e_j$ becomes $e_1-F_j$;
the final square twist gives $-e_1-F_j$.
For $f_j$, the two effective factors give
$f_j\xmapsto{T_{\gamma_j}}f_j+e_j-e_{j+1}
\xmapsto{T_{\beta_j}}e_j-e_{j+1}$, which the remaining
factors fix. Summing the formulas for $Lf_j$ gives
$LF_j=e_1-e_{j+1}$ in \eqref{eq:L-action}.
Alternating induction from $Le_1=-e_1-F_1$ now yields
\begin{equation}\label{eq:surface-orbit}
L^{2j-1}e_1=-e_j-F_j,\qquad L^{2j}e_1=e_{j+1}+F_j
\quad(1\le j\le g-1).
\end{equation}
Since $(D_g)_*$ fixes $e_1$, the expression $r_g=k_g^gD_g^{-1}$ in
\eqref{eq:surface-root} gives $Be_1=L^ge_1$.
For $g=2\ell$ this is $e_{\ell+1}+F_\ell$, and for $g=2\ell+1$ it is
$-e_{\ell+1}-F_{\ell+1}$. Together with \eqref{eq:A-vector}, these expressions
prove \eqref{eq:pairings}, including $g=2$.
\end{proof}

\begin{proposition}\label{prop:surface}
For every $g\ge2$, the pairs $(x_g,q_g^tr_gq_g^{-t})$, $t\in\Z$,
generate $\Mg$ and represent infinitely many Nielsen classes.
Their invariant $J_{\rho_g}$ is a quartic polynomial with leading
coefficient $2$.
\end{proposition}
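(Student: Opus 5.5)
The plan is to apply Theorem~\ref{thm:criterion} with $G=\Mg$, $x=x_g$, $r=r_g$, $m=2g-1$ and $q=q_g=D_1$, using the data of Proposition~\ref{prop:surface-data}. For the generation hypotheses \eqref{eq:group-hypotheses}, note that $r_g^{2g-1}=D_g$. Hence $\langle x_g,r_g^{m}\rangle=\langle x_g,D_g\rangle=\Mg$, so in particular $q_g$ lies in it. Also $[q_g,r_g^m]=[D_1,D_g]=1$, and $m=2g-1\ge3$. Lemma~\ref{lem:generation} then shows that every pair $(x_g,q_g^tr_gq_g^{-t})$ generates $\Mg$.

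For separation, take $\rho=\rho_g$ on $V=H_1(\Sigma_g;\Q)$. By \eqref{eq:twist-action},
\[
\rho_g(D_1)=I+N,\qquad Nw=\omega(e_1,w)\,e_1 .
\]
Since $\omega(e_1,e_1)=0$, we have $N^2=0$. Lemma~\ref{lem:trace}(2) therefore gives a polynomial $P$ of degree at most four, with $[t^4]P=\Phi_A(NBN,NB^{-1}N)$.

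The key step is to evaluate this coefficient using the rank-one form of $N$. For any matrix $C$ we have $NCN=\omega(e_1,Ce_1)\,N$. Lemma~\ref{lem:pairings} gives $NBN=(-1)^gN$. Because $B$ is symplectic,
\[
\omega(e_1,B^{-1}e_1)=\omega(Be_1,e_1)=-(-1)^g,
\]
so $NB^{-1}N=-(-1)^gN$. By bilinearity,
\[
[t^4]P=-\Phi_A(N,N)=-\bigl(\tr(ANA^{-1}N)+\tr(A^{-1}NAN)\bigr).
\]

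Each of these traces is a rank-one computation. For example, $ANA^{-1}N$ sends $w$ to $\omega(e_1,w)\,\omega(e_1,A^{-1}e_1)\,Ae_1$. Its trace is therefore $\omega(e_1,A^{-1}e_1)\,\omega(e_1,Ae_1)$. Using $\omega(e_1,A^{-1}e_1)=\omega(Ae_1,e_1)=-\omega(e_1,Ae_1)$, this trace equals $-\omega(e_1,Ae_1)^2=-1$ by \eqref{eq:pairings}. The second trace is the same by symmetry. Hence $[t^4]P=2$, so $P$ is a quartic with leading coefficient $2$. Theorem~\ref{thm:criterion}(2) then gives infinitely many Nielsen classes.

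The main obstacle is only sign bookkeeping: the intersection-sign convention, the symplectic identity $\omega(e_1,C^{-1}e_1)=-\omega(e_1,Ce_1)$, and the rank-one trace formula. The parity sign $(-1)^g$ from Lemma~\ref{lem:pairings} cancels because it appears in both $NBN$ and $NB^{-1}N$. The result therefore rests only on $\omega(e_1,Ae_1)=\pm1$ and is uniform in $g\ge2$.
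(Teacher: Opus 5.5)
Your proposal is correct and follows the paper's proof: you verify the criterion's hypotheses from Proposition~\ref{prop:surface-data} with $m=2g-1$, and write $\rho_g(D_1)=I+N$ with rank-one square-zero $N$. You then reduce $NBN$ and $NB^{-1}N$ to $\pm\mu N$ by symplecticity and evaluate $\Phi_A(N,N)=-2\omega(e_1,Ae_1)^2$ by rank-one traces, giving the leading coefficient $2$.
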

\begin{proof}
Proposition~\ref{prop:surface-data} verifies
\eqref{eq:group-hypotheses} with $m=2g-1$.
By \eqref{eq:twist-action}, $\rho_g(q_g)=I+N$ with
$N(w)=\omega(e_1,w)e_1$.
Since $N^2=0$, Theorem~\ref{thm:criterion}(2) gives a polynomial
$P(t)=J_{\rho_g}(x_g,q_g^tr_gq_g^{-t})$ of degree at most four.
Put $A=\rho_g(x_g)$, $B=\rho_g(r_g)$, and
$\lambda=\omega(e_1,Ae_1)$, $\mu=\omega(e_1,Be_1)$.
The rank-one expression for $N$, together with preservation of
$\omega$ by $B$, gives
\[
NBN=\mu N,\qquad NB^{-1}N=-\mu N.
\]
Likewise, $ANA^{-1}(w)=\omega(Ae_1,w)Ae_1$ and
$A^{-1}NA(w)=\omega(A^{-1}e_1,w)A^{-1}e_1$.
Using $\tr(w\mapsto\varphi(w)u)=\varphi(u)$, we obtain
\[
\tr(ANA^{-1}N)=\tr(A^{-1}NAN)=-\lambda^2,
\qquad \Phi_A(N,N)=-2\lambda^2.
\]
Equation~\eqref{eq:quartic-test} and Lemma~\ref{lem:pairings} give
\begin{equation}\label{eq:quartic-coefficient}
[t^4]P(t)=\Phi_A(NBN,NB^{-1}N)
=\Phi_A(\mu N,-\mu N)=2\lambda^2\mu^2=2.
\end{equation}
The polynomial therefore has degree exactly four.
Theorem~\ref{thm:criterion} now gives generation and infinitely many
Nielsen classes.
\end{proof}

\begin{corollary}[Symplectic consequence]\label{cor:symplectic}
For every $g\ge2$, the group $\Sp_{2g}(\Z)$ has countably infinitely many
Nielsen classes of generating pairs, obtained as homology images of the
pairs in Proposition~\ref{prop:surface}.
\end{corollary}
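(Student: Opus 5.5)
The plan is to push the pairs of Proposition~\ref{prop:surface} forward along the homology representation and apply Remark~\ref{rem:quotients}. Let $\pi=\rho_g:\Mg\to\Sp_{2g}(\Z)$. Its surjectivity is classical; it follows, for example, from the fact that twists along the chain curves and $\alpha_2$ generate $\Mg$ (Proposition~\ref{prop:surface-data}) and their symplectic transvections generate $\Sp_{2g}(\Z)$. Alternatively, take it as the standard fact that the symplectic representation is onto. Write $\sigma:\Sp_{2g}(\Z)\hookrightarrow\GL(H_1(\Sigma_g;\Q))$ for the inclusion, so that $\rho_g=\sigma\circ\pi$.

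For every $t\in\Z$, the pair $(\pi(x_g),\pi(q_g^tr_gq_g^{-t}))$ is the image of a generating pair of $\Mg$. Since $\pi$ is surjective, each such pair generates $\Sp_{2g}(\Z)$. By Remark~\ref{rem:quotients},
\[
J_\sigma\bigl(\pi(x_g),\pi(q_g^tr_gq_g^{-t})\bigr)=J_{\rho_g}(x_g,q_g^tr_gq_g^{-t})=P(t).
\]
Proposition~\ref{prop:surface} shows that $P$ is quartic with leading coefficient $2$, so it is nonconstant and takes infinitely many values on $\Z$. By Lemma~\ref{lem:trace}(1), applied in $\Sp_{2g}(\Z)$ with the representation $\sigma$, these image pairs represent infinitely many Nielsen classes.

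Countability is the remaining point. $\Sp_{2g}(\Z)$ is countable, so it has only countably many ordered pairs, and hence at most countably many Nielsen classes. Combined with the infinitude above, this gives exactly countably many Nielsen classes. The same argument also supplies the word ``countably'' in Theorem~\ref{thm:main}(1) for $\Mg$, which is countable because it is finitely generated.

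The only step needing an outside input is the surjectivity of $\rho_g$. I expect this to be a citation (e.g.\ the standard result in Farb--Margalit) rather than a real obstacle. The trace computation itself needs no new work, because $J_\sigma$ is literally the same function as $J_{\rho_g}$ evaluated on the images.
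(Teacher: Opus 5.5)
Your proposal is correct and follows the paper's proof. Both arguments use four ingredients: the surjectivity of $\rho_g$ (the paper cites Farb--Margalit, Theorem~6.4), generation by the image pairs, the same nonconstant quartic trace polynomial obtained through $\rho_g=\sigma\circ\pi$, and the countability of $\Sp_{2g}(\Z)$. One caution: your first justification of surjectivity is only a restatement, because by Proposition~\ref{prop:surface-data} the claim that the transvections along the chain classes and $[\alpha_2]$ generate $\Sp_{2g}(\Z)$ is equivalent to surjectivity, so the citation you fall back on is what actually proves that step.
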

\begin{proof}
The integral homology action $\rho_g$ is surjective
\cite[Theorem~6.4]{FM}. Its image pairs therefore generate
$\Sp_{2g}(\Z)$, and the symmetric trace in the defining representation
is the same nonconstant polynomial as in Proposition~\ref{prop:surface}.
The group is countable.
\end{proof}

Two generating pairs of a group $G$ are \emph{T-equivalent} if they are
related by Nielsen moves and simultaneous application of automorphisms
of $G$. The resulting classes are also called \emph{T-systems}.
\begin{corollary}[T-equivalence]\label{cor:T-systems}
For every $g\ge2$, the group $\Mg$ has countably infinitely many
T-equivalence classes of generating pairs.
\end{corollary}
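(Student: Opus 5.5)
We need to show that the pairs $(x_g,r_{g,t})$, where $r_{g,t}=q_g^tr_gq_g^{-t}$, fall into infinitely many T-equivalence classes. The plan is to take the Nielsen invariant $J_{\rho_g}$ from Proposition~\ref{prop:surface} and make it invariant under automorphisms of $\Mg$ as well.

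The natural route is through the structure of $\Aut(\Mg)$. For $g\ge3$, Ivanov's theorem identifies $\Aut(\Mg)$ with the extended mapping class group $\Mod^{\pm}(\Sigma_g)$, acting by conjugation. For $g=2$, the analogous statement (McCarthy) says that every automorphism is conjugation by an extended mapping class. Since the paper has not defined a $\Mod^\pm$ macro, I would describe this group in words when writing the proof.

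Next I would check how an automorphism $\varphi$, given by conjugation by $h$, changes $J_{\rho_g}$. The homology action of $h$ is a matrix $M\in\GL_{2g}(\Z)$ satisfying $\omega(Mu,Mw)=\pm\omega(u,w)$. Then $\rho_g(\varphi(c))=M\rho_g(c)M^{-1}$ for every $c\in\Mg$, so traces of $\rho_g$ are unchanged, and hence
\[
J_{\rho_g}(\varphi(u),\varphi(v))=J_{\rho_g}(u,v).
\]
Combined with Lemma~\ref{lem:trace}(1), this shows that $J_{\rho_g}$ is constant on T-equivalence classes.

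The conclusion then follows as before. By Proposition~\ref{prop:surface}, the invariant $P(t)$ is a quartic polynomial with leading coefficient $2$, so it is nonconstant. As in the proof of Theorem~\ref{thm:criterion}, it therefore takes infinitely many values on $t\ge0$. This gives infinitely many T-classes. The count is at most countable because $\Mg$ is countable, so the set of generating pairs is countable.

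The main obstacle is the citation for $\Aut(\Mg)$, particularly in genus two, where the hyperelliptic centre complicates matters. If that statement proves delicate, I would use a weaker fact: every automorphism of $\Mg$ preserves the kernel of $\rho_g$ up to an automorphism of $\Sp_{2g}(\Z)$. For $\Sp_{2g}(\Z)$ itself, the automorphisms are known to be conjugations by $\operatorname{GSp}^{\pm}$ elements, possibly composed with a central character. A central twist would multiply the commutator's image by $\pm I$, which only changes the trace by a sign. I would therefore fall back to the invariant $|J_{\rho_g}|$, or equivalently $P(t)^2$, which is still nonconstant on $t\ge0$. The fallback therefore still yields infinitely many classes.
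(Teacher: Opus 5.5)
There is a genuine error in the genus-two case. Your strategy is to make $J_{\rho_g}$ a T-invariant by controlling $\Aut(\Mg)$. This differs from the paper's route, and it works for $g\ge3$: there Ivanov's theorem makes every automorphism conjugation by an extended mapping class $h$, so $\rho_g(\varphi(c))=h_*\rho_g(c)h_*^{-1}$.

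Your genus-two input is false: not every automorphism of $\Mod(\Sigma_2)$ has that form. McCarthy's theorem, as quoted in the paper, gives $|\Out(\Mod(\Sigma_2))|=4$, whereas conjugations by extended mapping classes account for only a subgroup of order two. Here is an explicit automorphism outside that subgroup.
Let $\iota$ be the central hyperelliptic involution, and let $\chi:\Mod(\Sigma_2)\to\Z/2$ be the reduction of the abelianization $H_1(\Mod(\Sigma_2);\Z)\cong\Z/10$. Since $\iota$ is a product of ten nonseparating twists, $\chi(\iota)=0$, so $f\mapsto f\,\iota^{\chi(f)}$ is an automorphism (an involution). It sends a nonseparating twist, of homology trace $4$, to an element of homology trace $-4$. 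Hence it is not conjugation by any extended mapping class.
Your fallback does not close this gap. It assumes without argument that every automorphism of $\Mg$ preserves $\ker\rho_g$, and establishing that again requires the correct description of $\Aut(\Mod(\Sigma_2))$.

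The repair is short. Central factors cancel in commutators, $[u\iota^a,v\iota^b]=[u,v]$, so this transvection fixes every commutator, and $J_{\rho_g}$ is exactly invariant under all of $\Aut(\Mod(\Sigma_2))$. For the same reason, your remark that a central twist multiplies the commutator's image by $\pm I$ is mistaken, and passing to $|J_{\rho_g}|$ is unnecessary. Once corrected, your route proves slightly more than the paper: pairs with distinct values of $P(t)$ are T-inequivalent outright.

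The paper's proof avoids any knowledge of how automorphisms act on homology. Conjugation by a group element $w(u,v)$ is induced by an automorphism of the free group of rank two, hence by Nielsen moves. So inner automorphisms act trivially on Nielsen classes, and $\Aut(\Mg)$ acts on them through the finite group $\Out(\Mg)$. Each T-system is therefore a finite union of Nielsen classes, and Proposition~\ref{prop:surface} finishes. This uses only the finiteness of $\Out(\Mg)$, not its explicit structure.
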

\begin{proof}
Inner automorphisms act trivially on Nielsen classes of generating pairs:
if $f:F(X,Y)\twoheadrightarrow G$ represents a pair and $f(w)=v$,
then $f\circ\kappa_w=\kappa_v\circ f$, where $\kappa$ denotes inner
conjugation. Thus the action of $\Aut(G)$ on Nielsen classes factors
through $\Out(G)$. McCarthy proved that $\Out(\Mg)$ is finite for every
$g\ge2$, of order four for $g=2$ and two for $g\ge3$ \cite{McCarthy}.
Each T-system is consequently a finite union of Nielsen classes.
Proposition~\ref{prop:surface} gives infinitely many such classes, hence
infinitely many T-systems. Countability follows from that of $\Mg$.
\end{proof}

\section{Free-group automorphisms and their homological quotients}\label{sec:free}
Let $F_n=F(a_1,\ldots,a_n)$ and let
$\rho:\Aut(F_n)\twoheadrightarrow\GL_n(\Z)$ be abelianization in the
basis $[a_1],\ldots,[a_n]$. Inner automorphisms act trivially, so $\rho$
factors through $\Out(F_n)$. Put
\[
\SAut(F_n)=\ker(\det\circ\rho),\qquad
\SOut(F_n)=\SAut(F_n)/\operatorname{Inn}(F_n).
\]
Write $R_{ij}:a_i\mapsto a_ia_j$ and $L_{ij}:a_i\mapsto a_ja_i$,
fixing all other basis elements. Gersten's generating theorem
\cite{Gersten} says that all $R_{ij}$ and $L_{ij}$ generate
$\SAut(F_n)$ for $n\ge3$; see also
\cite[Proposition~2.5(3)]{Kapovich}. Their elementary homology matrices
also show that the special groups map onto $\SL_n(\Z)$.

For both constructions below, use the sign parameter
\begin{equation}\label{eq:signed-groups}
G_{n,\delta}=\begin{cases}\SAut(F_n),&\delta=0,\\\Aut(F_n),&\delta=1,
\end{cases}\qquad
\overline G_{n,\delta}=\begin{cases}\SOut(F_n),&\delta=0,\\\Out(F_n),&\delta=1.
\end{cases}
\end{equation}
A bar denotes an outer class. In each rank range, the conjugator and
the trace polynomial are independent of $\delta$.

\subsection{Ranks at least four}\label{subsec:large-rank}
Fix $n\ge4$ and $\delta\in\{0,1\}$, and put
$\varepsilon=(-1)^{(1-\delta)(n-1)}$ and $\eta=(-1)^{1-\delta}$.
Define
\begin{equation}\label{eq:large-generators}
\begin{gathered}
x_\delta(a_i)=a_{i+1}\ (i<n),\qquad x_\delta(a_n)=a_1^\varepsilon,\\
r_\delta(a_1)=a_1a_3,\quad
r_\delta(a_2)=a_3^{-1}a_2a_3,\quad
r_\delta(a_3)=a_3^{-1}a_2,\\
r_\delta(a_4)=a_4^\eta,\qquad r_\delta(a_i)=a_i\ (i\ge5),\qquad q=R_{32}.
\end{gathered}
\end{equation}
On the first three basis elements this is the root of Margalit and
Schleimer \cite[p.~1497]{MS}. Direct substitution gives
\begin{equation}\label{eq:large-root}
r_\delta^2=D:=R_{12},\qquad
\det\rho(x_\delta)=(-1)^{\delta(n-1)},\qquad
\det\rho(r_\delta)=(-1)^\delta.
\end{equation}
Thus $r_\delta$ is an automorphism with inverse $r_\delta D^{-1}$.
The family is
\begin{equation}\label{eq:large-family}
r_{\delta,t}=q^tr_\delta q^{-t},\qquad
\mathcal P_{n,\delta,t}=(x_\delta,r_{\delta,t}).
\end{equation}

\begin{lemma}[Generation]\label{lem:large-generation}
For every $t\in\Z$, the pair $\mathcal P_{n,\delta,t}$ generates
$G_{n,\delta}$.
\end{lemma}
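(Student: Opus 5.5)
By Lemma~\ref{lem:generation} with $m=2$, it suffices to prove two things. First, that $q=R_{32}\in\langle x_\delta,r_\delta^2\rangle=\langle x_\delta,R_{12}\rangle$. Second, that $[R_{32},R_{12}]=1$ and $\langle x_\delta,r_\delta\rangle=G_{n,\delta}$.

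The commutation is immediate: both maps fix $a_2$, $R_{12}$ changes only $a_1$ and $R_{32}$ changes only $a_3$, and neither image involves the letter the other changes. So both composites send $a_1\mapsto a_1a_2$, $a_3\mapsto a_3a_2$.

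Containment in $G_{n,\delta}$ follows from \eqref{eq:large-root}. For $\delta=0$ we have $\varepsilon=(-1)^{n-1}$, so $\det\rho(x_0)=(-1)^{n-1}\varepsilon=1$, and $\det\rho(r_0)=1$. Hence both lie in $\SAut(F_n)$. For $\delta=1$ there is nothing to check.

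The main step is to show $H=\langle x_\delta,R_{12}\rangle$ contains all $R_{ij}$ and $L_{ij}$, and hence $\SAut(F_n)$ by Gersten's theorem. This also gives $q\in H$.

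Conjugation by $x_\delta$ shifts indices. For $i,j<n$ one checks $x_\delta R_{ij}x_\delta^{-1}=R_{i+1,j+1}$. Starting from $R_{12}$ we obtain $R_{23},\dots,R_{n-1,n}$. The wrap-around conjugates $R_{n-1,n}\mapsto$ a transvection $a_n\mapsto a_na_1^{\varepsilon}$, i.e.\ $R_{n1}^{\pm1}$, and next $R_{12}$ again (up to sign/inversion twists when $\varepsilon=-1$). So $H$ contains the cyclic family $R_{i,i+1}$ with indices mod $n$, possibly twisted by inversions of $a_1$.

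From adjacent transvections I would generate all $R_{ij}$ using the standard commutator relation $[R_{ij},R_{jk}]=R_{ik}^{\pm1}$ for distinct $i,j,k$, with the exact sign depending on conventions. Iterating gives every $R_{ij}$ with $i\ne j$; $n\ge3$ guarantees distinct triples.

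For the left transvections, I would use the identity $L_{ij}=\iota_j R_{ij}\iota_j^{-1}$-type relations, or more simply the observation that $R_{ij}L_{ij}^{-1}$ is conjugation of $a_i$ by $a_j$. Alternatively, Kapovich's Proposition~2.5 lists generating sets of $\SAut(F_n)$ consisting of right transvections together with a permutation-like element, and $x_\delta$ supplies that element. I expect this bookkeeping of signs and inverses (especially through the wrap-around when $\varepsilon=-1$) to be the main obstacle. If the pure commutator route fails to produce the $L_{ij}$, I would produce one $L_{ij}$ from a product such as $R_{ij}$ conjugated by the inversion-composed shift $x_\delta^n$, which acts on the basis by $a_i\mapsto a_i^{\varepsilon}$.

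Once $\SAut(F_n)\subseteq H$, the case $\delta=0$ gives $H=G_{n,0}$ and $q\in H$. Then Lemma~\ref{lem:generation} yields $\langle x_\delta,r_{\delta,t}\rangle\supseteq\langle x_\delta,r_\delta\rangle$ for all $t$. Note $r_\delta\in\langle x_\delta,r_{\delta,t}\rangle$ uses $q\in\langle x_\delta,r^2\rangle$, which holds since $\SAut(F_n)\subseteq\langle x_\delta,R_{12}\rangle$ for both signs.

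For $\delta=1$, $\SAut(F_n)$ has index two in $\Aut(F_n)$. Also $r_1$ has determinant $-1$ because $a_4\mapsto a_4^{-1}$. Hence $\langle x_1,r_1\rangle$, which contains $\SAut(F_n)$ and an element of determinant $-1$, is all of $\Aut(F_n)$.
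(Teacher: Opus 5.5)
There is a genuine gap. It is your claim that $H=\langle x_\delta,R_{12}\rangle$ contains the left transvections, and hence $\SAut(F_n)$. This is false whenever $\varepsilon=1$, that is, for $\delta=1$ and for $\delta=0$ with $n$ odd. In those cases $x_\delta$ is a genuine permutation of the basis.

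To see the obstruction, let $d(\phi)$ be the determinant of the abelianized Fox Jacobian $\bigl(\overline{\partial\phi(a_k)/\partial a_j}\bigr)_{k,j}$ over $\Z[t_1^{\pm1},\dots,t_n^{\pm1}]$. For an automorphism this is a unit, so it equals $\pm$ a monomial. The Fox chain rule gives $d(\psi\phi)=\psi_*(d(\phi))\,d(\psi)$, so $\{\phi: d(\phi)=\pm1\}$ is a subgroup of $\Aut(F_n)$. It contains every $R_{ij}$ and every basis permutation, but $d(L_{ij})=t_j$; partial conjugations and double inversions are excluded in the same way.

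Hence for $\varepsilon=1$ no $L_{ij}$ lies in $H$, and none of your fallbacks can work:
\begin{itemize}
\item $x_\delta^n$ is the identity;
\item no inversions $\iota_j$ are available;
\item no generating set consisting of right transvections and unsigned permutations can give $\SAut(F_n)$, so the appeal to Kapovich does not help either.
\end{itemize}
Your argument succeeds only for $\delta=0$ with $n$ even. There $x_0^n$ is the full inversion and $x_0^nR_{12}x_0^{-n}=L_{12}$. In the remaining cases your proof of $\langle x_\delta,r_\delta\rangle=G_{n,\delta}$ collapses, because it uses nothing about $r_\delta$ beyond $r_\delta^2=R_{12}$ and $\det\rho(r_\delta)$.

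The missing idea is that $r_\delta$ itself must enter the generation argument. Indeed $d(r_\delta)$ equals $-t_2t_3^{-2}$ or $t_2t_3^{-2}t_4^{-1}$ according as $\eta=1$ or $\eta=-1$, so it lies outside the subgroup above. The paper proves that Gersten's generators lie in $K=\langle x_\delta,r_\delta\rangle$, not in $H$, as follows:
\begin{enumerate}
\item Put $u=x_\delta^{-1}r_\delta x_\delta$ and $\sigma=D^{-1}u^{-3}DuD^{-2}u$. This $\sigma$ exchanges $a_1,a_2$ with both inverted, sends $a_3$ to $a_3^\eta$, and fixes the other basis elements.
\item Then $\sigma R_{21}\sigma^{-1}=L_{12}$.
\item Conjugating $R_{12}$ and $L_{12}$ by signed permutations in $\langle x_\delta,\sigma\rangle$ produces every $R_{ij}$ and $L_{ij}$.
\end{enumerate}

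Part of your argument does survive. Every $R_{ij}$ lies in $H$, so $q=R_{32}\in\langle x_\delta,r_\delta^2\rangle$, which is all that Lemma~\ref{lem:generation} requires of $H$. With the paper's composition convention, use $[R_{jk}^{-1},R_{ij}^{-1}]=R_{ik}$ for this; the plain commutator $[R_{ij},R_{jk}]$ sends $a_i$ to $a_ia_ja_k^{-1}a_j^{-1}$ and is not a transvection. The commutation $[R_{32},R_{12}]=1$ and your determinant bookkeeping are also correct.
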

\begin{proof}
First put $K=\langle x_\delta,r_\delta\rangle$ and
$H=\langle x_\delta,D\rangle\le K$. Cyclic conjugation of $D$ gives
$R_{12},R_{23},\ldots,R_{n-1,n}$ and $R_{n1}$, the last possibly
inverted. For distinct $i,j,k$, the exact identity
\begin{equation}\label{eq:right-commutator}
[R_{jk}^{-1},R_{ij}^{-1}]=R_{ik}
\end{equation}
follows by applying the factors from right to left to $a_i$: they give
$a_ia_j$, $a_ia_ja_k$, $a_ia_k$, $a_ia_k$. The full word fixes all
other basis elements. Induction along oriented paths in the cycle
therefore places every $R_{ij}$ in $H$.

Put $u=x_\delta^{-1}r_\delta x_\delta$. The word
\begin{equation}\label{eq:large-swap}
\sigma=D^{-1}u^{-3}DuD^{-2}u
\end{equation}
exchanges $a_1,a_2$ with both inverted, sends $a_3$ to $a_3^\eta$,
and fixes all other $a_i$. Here is a free-reduction certificate valid
for both signs. On $a=a_1$, $b=a_2$, $c=a_n$, we have
\[
u(a)=b^{-1}ab,\quad u(b)=b^{-1}a,\quad
u(c)=\begin{cases}cb,&\varepsilon=1,\\b^{-1}c,&\varepsilon=-1.
\end{cases}
\]
The successive images are
\[
\begin{array}{c|cc|cc}
 &a&b&c\ (\varepsilon=1)&c\ (\varepsilon=-1)\\ \hline
DuD^{-2}u&a^{-1}b&a^{-2}b&cab&b^{-1}a^{-1}c\\
u^{-3}DuD^{-2}u&b^{-1}&b^{-1}a^{-1}&c&c\\
\sigma&b^{-1}&a^{-1}&c&c.
\end{array}
\]
The map $u$ also sends $a_3$ to $a_3^\eta$ and fixes the remaining
basis elements. Since $D$ fixes $a_3$ and the exponent sum of $u$ in
the word \eqref{eq:large-swap} is $-1$, we obtain
$\sigma(a_3)=u^{-1}(a_3)=a_3^\eta$.

Since $\sigma^2=1$, substitution gives
\begin{equation}\label{eq:left-recovery}
\sigma R_{21}\sigma^{-1}=L_{12}.
\end{equation}
The underlying permutations of $x_\delta,\sigma$ are an $n$-cycle and
the transposition $(1\,2)$, so their image is $S_n$. Given $i\ne j$,
choose a signed permutation in $\langle x_\delta,\sigma\rangle$ sending
the ordered pair $(1,2)$ to $(i,j)$. Conjugating $R_{12}$ and $L_{12}$
by it gives both $R_{ij}$ and $L_{ij}$, up to inversion and interchange
of left and right. Hence Gersten's generating set is contained in $K$.
For $\delta=0$ both generators have determinant $1$, so
$K=\SAut(F_n)$. For $\delta=1$, $K$ also contains the determinant
$-1$ element $r_\delta$, so $K=\Aut(F_n)$.

Finally $q=R_{32}\in H$ and $[q,D]=1$. Theorem~\ref{thm:criterion}(1),
with $m=2$, now proves the statement for every $t$.
\end{proof}

\begin{lemma}[Separation]\label{lem:large-trace}
For both values of $\delta$ and every $n\ge4$,
\begin{equation}\label{eq:large-polynomial}
J_\rho(\mathcal P_{n,\delta,t})=2n-9+4t-4t^2.
\end{equation}
\end{lemma}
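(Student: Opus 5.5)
The plan is to apply Lemma~\ref{lem:trace}(2) directly with the rational abelianization $\rho$, which is legitimate since $q=R_{32}$ acts on homology by $e_3\mapsto e_3+e_2$. Thus $\rho(q)=I+N$ with $N=E_{23}$ and $N^2=0$. Write $A=\rho(x_\delta)$ and $B=\rho(r_\delta)$. From \eqref{eq:large-generators}:
\begin{itemize}
\item $A$ is the signed cyclic permutation $e_i\mapsto e_{i+1}$ for $i<n$, $e_n\mapsto\varepsilon e_1$.
\item $B$ fixes $e_i$ for $i\ge5$ and sends $e_1\mapsto e_1+e_3$, $e_2\mapsto e_2$, $e_3\mapsto e_2-e_3$, $e_4\mapsto\eta e_4$.
\end{itemize}
Since $B$ is invertible of small support, $B^{-1}$ is read off on the span of $e_1,e_2,e_3$ together with $e_4\mapsto\eta e_4$. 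Then
\[
U(t)=(I+tE_{23})B(I-tE_{23}),\qquad W(t)=(I+tE_{23})B^{-1}(I-tE_{23})
\]
are explicit $n\times n$ matrices that differ from $I$ only in rows and columns $1,\dots,4$.

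To organize the trace I will write $U(t)=I+X_t$ and $W(t)=I+Y_t$, with $X_t,Y_t$ supported in the index block $\{1,2,3,4\}$. Then
\[
\tr(AU A^{-1}W)=n+\tr X_t+\tr Y_t+\tr\bigl((AX_tA^{-1})Y_t\bigr).
\]
The analogous expansion holds for $\tr(A^{-1}UAW)$, whose cross term is $\tr\bigl((A^{-1}X_tA)Y_t\bigr)$. Conjugation by the signed permutation $A^{\pm1}$ moves the support of $X_t$ to $\{2,3,4,5\}$ or $\{n,1,2,3\}$ (indices mod $n$), possibly introducing the sign $\varepsilon$ on entries that cross the wraparound $n\leftrightarrow1$. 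Each cross term is then a short sum of products of entries of $X_t$ and $Y_t$ over the overlapping index pairs. The traces $\tr X_t$ and $\tr Y_t$ are $t$-independent because conjugation by $I+tN$ preserves the trace. Each contributes the diagonal defect $(1+1-1-3)+(\eta-1)$ of $B$ or of $B^{-1}$. Together with the two copies of $n$, this should produce the constant $2n-9$ after the cross terms are added.

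Next I would check independence of $\delta$:
\begin{itemize}
\item Signs $\varepsilon$ attached to an entry crossing the wraparound appear in a product with a second such sign or not at all. Diagonal entries are unchanged by signed-permutation conjugation, so $\varepsilon$ drops out.
\item The entry $\eta$ sits only in the $(4,4)$ diagonal slot of $X_t$ and $Y_t$, and $N=E_{23}$ never touches index $4$. Hence the $\eta$-dependence lies in terms pairing index $4$ shifted to $5$, or for $n=4$ to $1$, against a diagonal entry of the other matrix.
\item For $n\ge5$ that partner entry is $0$ (index $5$ is outside the support of $Y_t$).
\item For $n=4$ it is $Y_{11}$, which equals $0$ since $B^{\pm1}$ has $e_1$-coefficient $1$ on $e_1$. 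A similar check applies to the $(1,1)$ entry of $U(t)$.
\end{itemize}
I would verify in addition that the $\eta$-terms of $\tr X_t$ and $\tr Y_t$ combine to something $\eta$-free, or else cancel against the cross terms. This is the delicate point: naively each gives $\eta-1$, so I must confirm that the claimed formula really is $\delta$-independent. If necessary I would first handle $\delta=1$ ($\eta=1$) and then examine the $\delta=0$ correction.

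Finally, the $t$-dependence. Lemma~\ref{lem:trace}(2) gives the quartic coefficient as $\Phi_A(NBN,NB^{-1}N)$. Here $NBN=B_{32}E_{23}=E_{23}$ and $NB^{-1}N=(B^{-1})_{32}E_{23}$. Conjugating $E_{23}$ by $A^{\pm1}$ gives $E_{34}$ or $E_{12}$, whose product traces with $E_{23}$ vanish, so the quartic term is $0$. The same support argument should kill the cubic term. The quadratic and linear coefficients $-4$ and $4$ then come from the few surviving products of the $t$-linear parts $t(NB-BN)$ and $t(NB^{-1}-B^{-1}N)$ with shifted entries of $B$ and $B^{-1}$. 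The main obstacle I expect is purely this bookkeeping of shifted supports, especially the $n=4$ wraparound where index $4$ meets index $1$. I would carry it out by listing the nonzero entries of $X_t$ and $Y_t$ explicitly and enumerating the overlap pairs for the two shifts, treating $n=4$ and $n\ge5$ separately.
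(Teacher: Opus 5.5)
Your strategy is right and is essentially the paper's. The paper writes $B_{\delta,t}$ explicitly as a $3\times3$ block $\oplus(\eta)\oplus I_{n-4}$, uses $B_{\delta,t}^{-1}=B_{\delta,t}(I_n-E_{21})$, and reads off the diagonals of $C_{\delta,t}$ and $C_{\delta,t}^{-1}$. Your splitting $U=I+X_t$, $W=I+Y_t$ only reorganizes that calculation.

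As written, however, the proposal does not establish the formula. The linear and quadratic coefficients are asserted rather than computed, and two intermediate claims are wrong.

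\begin{itemize}
\item Your formula $NBN=B_{32}E_{23}$ is correct, but $B_{32}=0$, not $1$, since $Be_2=e_2$. Likewise $(B^{-1})_{32}=0$, so $NBN=NB^{-1}N=0$. Hence $U(t)=B+t(NB-BN)$ and $W(t)$ are linear in $t$, and $\deg P\le 2$ at once. No support argument is needed for the cubic term.
\item More seriously, your $\delta$-independence bullets fail. You only follow the $\eta$-slot of $X_t$ to index $5$ (or $1$). Under conjugation by $A$, the entry $X_{33}=-2$ moves to $(4,4)$, where $Y_{44}=\eta-1$ sits. Under conjugation by $A^{-1}$, the entry $X_{44}=\eta-1$ moves to $(3,3)$, where $Y_{33}=-2$ sits. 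If the cross terms were $\eta$-free, as your bullets assert, the total would contain $4(\eta-1)$ and would depend on $\delta$.
\end{itemize}

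The missing bookkeeping is short. The nonzero entries of $X_t=B_t-I$ are
\[
X_{21}=t,\quad X_{23}=1-2t,\quad X_{31}=1,\quad X_{33}=-2,\quad X_{44}=\eta-1.
\]
The matrix $Y_t=B_t^{-1}-I$ has the same entries except $Y_{21}=t-1$. The only contributing slots are diagonal or lie in $\{1,2,3\}$, so no wraparound sign enters, and
\[
\begin{aligned}
\tr(AX_tA^{-1}Y_t)&=X_{21}Y_{23}+X_{33}Y_{44}=t(1-2t)-2(\eta-1),\\
\tr(A^{-1}X_tAY_t)&=Y_{21}X_{23}+Y_{33}X_{44}=(t-1)(1-2t)-2(\eta-1).
\end{aligned}
\]
Add $n+\tr X_t+\tr Y_t=n+2\eta-6$ to each. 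This gives $n-4+t-2t^2$ and $n-5+3t-2t^2$, exactly the paper's $\tr C_{\delta,t}$ and $\tr C_{\delta,t}^{-1}$. The $\eta$-terms cancel inside each trace, and the sum is $2n-9+4t-4t^2$.
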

\begin{proof}
Let $A_\delta=\rho(x_\delta)$ and $B_{\delta,t}=\rho(r_{\delta,t})$.
Since $\rho(q)=I_n+E_{23}$, we obtain
\begin{equation}\label{eq:large-block}
B_{\delta,t}=
\begin{pmatrix}1&0&0\\t&1&1-2t\\1&0&-1\end{pmatrix}
\oplus(\eta)\oplus I_{n-4},\qquad
B_{\delta,t}^{-1}=B_{\delta,t}(I_n-E_{21}).
\end{equation}
Put $C_{\delta,t}=A_\delta B_{\delta,t}A_\delta^{-1}B_{\delta,t}^{-1}$,
$v=-1+t-2t^2$ and $w=3t-2t^2$. Multiplication gives the diagonals
\[
\begin{array}{c|cc}
 &C_{\delta,t}&C_{\delta,t}^{-1}\\ \hline
n=4&(\eta,1,v,-\eta)&(\eta,w,-1,-\eta)\\
n\ge5&(1,1,v,-\eta,\eta,1,\ldots,1)&(1,w,-1,-\eta,\eta,1,\ldots,1).
\end{array}
\]
In rank four, $\varepsilon=\eta$; in higher ranks the wraparound sign
does not enter these diagonal entries. Thus in every rank
\begin{equation}\label{eq:large-traces}
\tr C_{\delta,t}=n-4+t-2t^2,\qquad
\tr C_{\delta,t}^{-1}=n-5+3t-2t^2.
\end{equation}
Their sum proves \eqref{eq:large-polynomial}.
\end{proof}

\subsection{Rank three}\label{subsec:rank3}
Write $F_3=F(a,b,c)$, fix $\delta\in\{0,1\}$, and put $s=2\delta-1$.
Define
\begin{equation}\label{eq:rank3-generators}
\begin{array}{c|ccc}
 &a&b&c\\ \hline
\alpha&b&a^{-1}&c\\
h_\delta&c^sa&a&b^{-1}\\
q&b^4ab^4&b&c.
\end{array}
\end{equation}
Their inverses are $\alpha^{-1}:(a,b,c)\mapsto(b^{-1},a,c)$,
$h_\delta^{-1}:(a,b,c)\mapsto(b,c^{-1},(ba^{-1})^{-s})$, and
$q^{-1}:a\mapsto b^{-4}ab^{-4}$, fixing $b,c$.
Moreover $\det\rho(\alpha)=1$ and $\det\rho(h_\delta)=(-1)^\delta$.
Put
\begin{equation}\label{eq:rank3-family}
z=\alpha^2,\qquad \alpha_t=q^t\alpha q^{-t},\qquad
\mathcal P_{3,\delta,t}=(h_\delta,\alpha_t).
\end{equation}
The involution $z$ inverts $a,b$ and fixes $c$.

For $\delta=0$, this base pair is Nielsen equivalent to Kapovich's
rank-three pair \cite[equation~(3.9)]{Kapovich}. Indeed his generators
are $(\alpha h_0,h_0)$: applying $\alpha h_0$ to $(a,b,c)$ gives
$(c^{-1}b,b,a)$. Starting from $(h_0,\alpha)$, interchange the
coordinates and multiply the first by the second. The additional input
below is the recovery of $q$ from $h_\delta,\alpha^2$, which extends
the base pair to an infinite family.

\begin{lemma}[Generation]\label{lem:rank3-generation}
For every $t\in\Z$, the pair $\mathcal P_{3,\delta,t}$ generates
$G_{3,\delta}$.
\end{lemma}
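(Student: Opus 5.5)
The plan is to apply Theorem~\ref{thm:criterion}(1), that is, Lemma~\ref{lem:generation}, with $x=h_\delta$, $r=\alpha$, $m=2$ and conjugator $q$. Three facts are needed:
\[
\langle h_\delta,\alpha\rangle=G_{3,\delta},\qquad q\in\langle h_\delta,z\rangle,\qquad [q,z]=1.
\]

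The commutation $[q,z]=1$ is a direct check on $a,b,c$. On $a$, both $qz$ and $zq$ give $b^{-4}a^{-1}b^{-4}$. Both maps send $b\mapsto b^{-1}$ and fix $c$.

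For base generation, I will first show that $K=\langle h_\delta,\alpha\rangle$ contains Gersten's generators $R_{ij},L_{ij}$. I would pass from $(h_\delta,\alpha)$ to Kapovich's pair $(\alpha h_0,h_0)$ via the stated Nielsen equivalence when $\delta=0$, and cite his generation result \cite{Kapovich} for $\SAut(F_3)$. Since I would rather not rely on that citation, I plan to produce elementary transvections directly. Conjugating $h_\delta$ by powers of $\alpha$, together with products such as $h_\delta\alpha^{\pm1}$, should yield signed-permutation-like elements and an element of the form $a\mapsto c^{\pm1}a$ fixing $b,c$ up to signed permutation. Conjugating by the signed permutations available in $K$ would then yield all $L_{ij}$, and via inversions by $z$-type involutions also all $R_{ij}$, as in the argument of Lemma~\ref{lem:large-generation}. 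Determinants then finish the step: for $\delta=0$ both generators lie in $\SAut(F_3)$, and for $\delta=1$ the generator $h_1$ has determinant $-1$, so $K=\Aut(F_3)$.

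The main obstacle is recovering $q$ inside $H=\langle h_\delta,z\rangle$. This is the ``exact word identity'' promised in the introduction, in which a product cancels an inner error in an intermediate commutator. My approach would be the following.
\begin{itemize}
\item Form $y=h_\delta z h_\delta^{-1}$, an involution inverting $h_\delta(a)=c^sa$ and $h_\delta(b)=a$ and fixing $b^{-1}$.
\item Take the product $y z$ (or $z y$). On the relevant letters it should act as a partial conjugation, say $a\mapsto b^{\pm1}ab^{\pm1}$-like or $c\mapsto a^{-1}ca$, modulo an inner automorphism.
\item Conjugate further by $h_\delta^{\pm1}$, and form a commutator with $z$ to isolate a pure element $a\mapsto bab$, up to an inner factor.
\item Multiply by a suitable conjugate to cancel that inner factor; the resulting element $p:a\mapsto bab$ then gives $q=p^4$.
\end{itemize}
I would search for the word by tracking images of $a,b,c$ under free reduction, as in the table in Lemma~\ref{lem:large-generation}. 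The sign $s$ should enter only through $c^{s}$ factors that cancel in $z$-conjugates, which makes the same word valid for both $\delta$. Once $q\in\langle h_\delta,z\rangle$, Lemma~\ref{lem:generation} gives generation by every $\mathcal P_{3,\delta,t}$.
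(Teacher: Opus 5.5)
Your skeleton matches the paper's: Lemma~\ref{lem:generation} with $x=h_\delta$, $r=\alpha$, $m=2$. Your check of $[q,z]=1$ is correct. However, the two steps that carry the content of the lemma are left as a search plan, and neither is established.

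\textbf{Base generation.} The obstruction is that $h_\delta=\tau_\delta D$ mixes a signed permutation $\tau_\delta\colon(a,b,c)\mapsto(c^s,a,b^{-1})$ with the transvection $D\colon a\mapsto ab$. Your proposal has no element of $K=\langle h_\delta,\alpha\rangle$ that separates them.
\begin{itemize}
\item The products you suggest are not signed permutations: $\alpha h_\delta$ sends $(a,b,c)$ to $(c^sb,b,a)$, and $h_\delta\alpha^{-1}$ sends it to $(a^{-1},c^sa,b^{-1})$.
\item Conjugating $h_\delta$ by powers of $\alpha$ only yields more elements of the same mixed shape.
\item The only signed permutations you actually exhibit in $K$ are powers of $\alpha$. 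These fix $c$, so they cannot spread a transvection to the pairs involving $c$.
\end{itemize}
The paper breaks this circularity with an explicit word identity, checked by free reduction:
$D=u h_\delta^2u h_\delta^{-1}u^{-1}h_\delta^{-1}u h_\delta^{-1}z^\delta h_\delta^{-3}u^{-1}$, where $u=\alpha^{1-2\delta}$.
Then $\tau_\delta=h_\delta D^{-1}\in K$, and $\langle\alpha,\tau_\delta^{1+\delta}\rangle$ is the full determinant-one signed permutation group. Conjugates of $D^{\pm1}$ by that group give all of Gersten's generators.

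Your Kapovich fallback can at most cover $\delta=0$. For $\delta=1$, your determinant argument presupposes $\SAut(F_3)\subseteq K$, which nothing in the proposal provides. Note that $h_1=\iota_c h_0$, where $\iota_c$ inverts $c$, so the citation says nothing about $\langle h_1,\alpha\rangle$.

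\textbf{Recovering $q$.} For $q\in\langle h_\delta,z\rangle$ you likewise give no word. Your intended intermediate, an element $a\mapsto bab$ (fixing $b,c$) inside $\langle h_\delta,z\rangle$ with $q$ its fourth power, is not shown to exist. Your $zy=zh_\delta zh_\delta^{-1}$ is exactly the paper's $p$. The paper then takes the commutator with $v=h_\delta^{-1}zh_\delta$, not with $z$. This gives $W=[p,v]=\kappa_{b^{-2}}q_0$ with $q_0\colon a\mapsto b^2ab^2$. Since $z$ commutes with $q_0$ and inverts $b$, we get $zWz=\kappa_{b^2}q_0$. Hence $WzWz=q_0^2=q$, with the inner error cancelled exactly.

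Without such explicit identities, or some other argument, both the base generation $\langle h_\delta,\alpha\rangle=G_{3,\delta}$ and the hypothesis $q\in\langle x,r^m\rangle$ of Lemma~\ref{lem:generation} remain unverified. So the proposal does not yet prove the lemma.
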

\begin{proof}
Let $D:a\mapsto ab$, fixing $b,c$, and put $u=\alpha^{1-2\delta}$.
The unified word identity
\begin{equation}\label{eq:rank3-D-word}
D=u h_\delta^2u h_\delta^{-1}u^{-1}h_\delta^{-1}u
h_\delta^{-1}z^\delta h_\delta^{-3}u^{-1}
\end{equation}
is checked by free reduction. To verify it, write
$V=h_\delta^{-1}u^{-1}h_\delta^{-1}u h_\delta^{-1}z^\delta h_\delta^{-3}u^{-1}$.
The first two rows below record the basis images after applying $V$
for each sign. The last row records their images after applying the
remaining prefix $u h_\delta^2u$, for either sign:
\[
\begin{array}{c|ccc}
 &a&b&c\\ \hline
V\ (\delta=0)&abc^{-1}&c^{-1}&c^{-1}a^{-1}\\
V\ (\delta=1)&a^{-1}b^{-1}c&c&a^{-1}c\\
u h_\delta^2uV&ab&b&c.
\end{array}
\]
Thus $D\in\langle\alpha,h_\delta\rangle$ and
$\tau_\delta=h_\delta D^{-1}$ sends $(a,b,c)$ to $(c^s,a,b^{-1})$.
We have $\tau_0^3=1$, while $\tau_1^3$ inverts all three basis elements.
Put $\beta_\delta=\tau_\delta^{1+\delta}$. Both $\alpha$ and
$\beta_\delta$ are determinant-one signed permutations; their underlying
permutations generate $S_3$. The involution $\alpha^2$ and its
$\beta_\delta$-conjugates give all double inversions, so
$\langle\alpha,\beta_\delta\rangle$ is the full determinant-one signed
permutation subgroup $W_3^+$.

Conjugates of $D$ and $D^{-1}$ by $W_3^+$ give all right and left
Nielsen maps. Given $i\ne j$, choose a signed permutation sending
$a$ to $a_i^{\pm1}$ and $b$ to $a_j^{\pm1}$; the sign of the third
image can always be chosen to make the determinant one. A positive
image of $a$ gives $R_{ij}^{\pm1}$, while a negative image gives
$L_{ij}^{\pm1}$, with the exponent controlled by the sign of the
image of $b$. Gersten's theorem therefore puts $\SAut(F_3)$ in
$\langle\alpha,h_\delta\rangle$. The determinants of the generators
give $\langle\alpha,h_\delta\rangle=G_{3,\delta}$.

To recover the conjugator using only $h_\delta,z$, put
$p=zh_\delta zh_\delta^{-1}$, $v=h_\delta^{-1}zh_\delta$ and $W=[p,v]$.
Their actions are independent of $\delta$:
\begin{equation}\label{eq:rank3-membership-table}
\begin{array}{c|ccc}
 &a&b&c\\ \hline
p&a&b^{-1}&ac^{-1}a^{-1}\\
v&ab^{-2}&b^{-1}&c^{-1}\\
W&ab^4&b&b^{-2}cb^2.
\end{array}
\end{equation}
The intermediate word has an inner error: if $q_0$ sends $a$ to
$b^2ab^2$ and fixes $b,c$, then $W=\kappa_{b^{-2}}q_0$, where
$\kappa_w$ is conjugation by $w$. The product below cancels that error
and gives the exact identity
\begin{equation}\label{eq:rank3-membership}
q=WzWz\in\langle h_\delta,z\rangle,\qquad [q,z]=1.
\end{equation}
For example, $zWz$ sends $(a,b,c)$ to $(b^4a,b,b^2cb^{-2})$,
and applying $W$ gives $(b^4ab^4,b,c)$. The commutation relation follows
from the defining images of $q,z$. Theorem~\ref{thm:criterion}(1),
with $x=h_\delta$, $r=\alpha$ and $m=2$, now proves the lemma.
\end{proof}

\begin{lemma}[Separation]\label{lem:rank3-trace}
For both values of $\delta$,
\begin{equation}\label{eq:rank3-polynomial}
J_\rho(\mathcal P_{3,\delta,t})=128t^2+16t.
\end{equation}
\end{lemma}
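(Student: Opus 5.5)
The plan is to apply Lemma~\ref{lem:trace}(2) directly, with $x=h_\delta$, $r=\alpha$ and $\rho$ the abelianization. In the basis $[a],[b],[c]$, with columns recording images, the defining table \eqref{eq:rank3-generators} gives
\[
A=\rho(h_\delta)=\begin{pmatrix}1&1&0\\0&0&-1\\s&0&0\end{pmatrix},\qquad
B=\rho(\alpha)=\begin{pmatrix}0&-1&0\\1&0&0\\0&0&1\end{pmatrix},\qquad
\rho(q)=I_3+8E_{21}.
\]
Reading the inverse images listed after \eqref{eq:rank3-generators} gives
\[
A^{-1}=\begin{pmatrix}0&0&s\\1&0&-s\\0&-1&0\end{pmatrix},\qquad B^{-1}=B^{T}.
\]
Thus $N=8E_{21}$ satisfies $N^2=0$. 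By \eqref{eq:trace-polynomial}, $P(t)=J_\rho(h_\delta,\alpha_t)=\Phi_A(U(t),W(t))$ is a polynomial of degree at most four, with $U,W$ as in \eqref{eq:squarezero-expansion}.

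First I will dispose of the top coefficients. Since $E_{21}ME_{21}=M_{12}E_{21}$, we have $NBN=-64E_{21}$ and $NB^{-1}N=64E_{21}$. By the same rule, $\Phi_A(E_{21},E_{21})=2A_{12}(A^{-1})_{12}=0$, because $(A^{-1})_{12}=0$. So \eqref{eq:quartic-test} shows that the quartic coefficient vanishes.

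The cubic coefficient is
\[
\Phi_A(NB-BN,\,-NB^{-1}N)+\Phi_A(-NBN,\,NB^{-1}-B^{-1}N).
\]
Every term here contains a factor $E_{21}$. I will evaluate each term with the trace rule $\tr(XE_{21}YE_{21})=X_{12}Y_{12}$ and its one-sided analogue $\tr(XE_{21})=X_{12}$. I expect these terms to vanish, again because of the zero in $A^{-1}$, or to cancel in pairs.

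The real work is in the quadratic, linear and constant coefficients. The constant term is $J_\rho(h_\delta,\alpha)=\tr(ABA^{-1}B^{-1})+\tr(BAB^{-1}A^{-1})$, which I will compute directly; it should equal $0$. For the other two coefficients I will simply multiply out. Concretely, I will compute the explicit matrix
\[
B_t=(I+8tE_{21})B(I-8tE_{21}),
\]
whose entries are at most quadratic in $t$, and likewise $B_t^{-1}=B_t^{T}$, which holds because $B_t$ is orthogonal only up to this unipotent conjugation. If that transpose shortcut fails, I will compute $B_t^{-1}$ directly as $(I+8tE_{21})B^{-1}(I-8tE_{21})$. I then form $C_t=AB_tA^{-1}B_t^{-1}$ and its inverse and take traces, as in the proof of Lemma~\ref{lem:large-trace}, recording the diagonals of $C_t$ and $C_t^{-1}$.

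The main obstacle is keeping the sign $s$ under control. $A$ and $A^{-1}$ each contain $s$ linearly, so each trace is a priori a polynomial in $s$ of degree at most two. The claim that the answer is independent of $\delta$ should come from two facts: $s^2=1$, and the linear-in-$s$ contributions cancelling between $\tr C_t$ and $\tr C_t^{-1}$. If they fail to cancel separately, I would check whether they cancel only in the symmetric sum $J_\rho$. Finally, I will assemble the coefficients and confirm $P(t)=128t^2+16t$. This is nonconstant, so combined with Lemma~\ref{lem:rank3-generation} and Lemma~\ref{lem:trace}(1) it gives infinitely many classes.
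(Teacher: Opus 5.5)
Your plan is correct and follows the paper's route: write $B_t=(I+8tE_{21})B(I-8tE_{21})$ explicitly, multiply out $C_t=AB_tA^{-1}B_t^{-1}$, and read off $\tr C_t$ and $\tr C_t^{-1}$; the paper gets the latter from $\det C_t=1$ and the diagonal cofactors. Your quartic and cubic vanishing checks are correct but become redundant once you multiply out. One correction: the shortcut $B_t^{-1}=B_t^{T}$ is false, since $(B_tB_t^{T})_{11}=64t^2+1$, so use your fallback instead, or note that $B_t^2=\rho(z)=\mathrm{diag}(-1,-1,1)$ gives $B_t^{-1}=\rho(z)B_t$; the computation then shows that $s$ drops out of each trace separately once $s^2=1$ is used, so no cancellation between the two traces is needed.
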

\begin{proof}
In the homology basis $[a],[b],[c]$,
\begin{equation}\label{eq:rank3-matrices}
A_\delta=\begin{pmatrix}1&1&0\\0&0&-1\\s&0&0\end{pmatrix},\qquad
B=\begin{pmatrix}0&-1&0\\1&0&0\\0&0&1\end{pmatrix},\qquad
\rho(q)=I_3+8E_{21}.
\end{equation}
Put $k=8t$. Then $B_t=\rho(\alpha_t)$ and
$C_{\delta,t}=A_\delta B_tA_\delta^{-1}B_t^{-1}$ are
\begin{equation}\label{eq:rank3-commutator}
B_t=\begin{pmatrix}k&-1&0\\k^2+1&-k&0\\0&0&1\end{pmatrix},\qquad
C_{\delta,t}=\begin{pmatrix}
k^2+k&-k-1&s(k^2+2k+2)\\
-k^2-1&k&0\\
sk&-s&k+1
\end{pmatrix}.
\end{equation}
The determinant of $C_{\delta,t}$ is $1$, and its diagonal cofactors
are $k^2+k,-k,-k-1$. Consequently
\begin{equation}\label{eq:rank3-traces}
\tr C_{\delta,t}=64t^2+24t+1,\qquad
\tr C_{\delta,t}^{-1}=64t^2-8t-1.
\end{equation}
Adding proves \eqref{eq:rank3-polynomial}, independently of $\delta$.
\end{proof}

The two rank ranges now give the four geometric groups simultaneously.
\begin{proposition}\label{prop:free}
For every $n\ge3$ and $\delta\in\{0,1\}$, the pairs
$\mathcal P_{n,\delta,t}$ are pairwise Nielsen-inequivalent generating
pairs of $G_{n,\delta}$, taking $t\ge1$ for $n\ge4$ and $t\ge0$ for
$n=3$. Their outer images have the same properties in
$\overline G_{n,\delta}$.
\end{proposition}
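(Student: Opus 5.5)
Generation is already settled: Lemma~\ref{lem:large-generation} for $n\ge4$ and Lemma~\ref{lem:rank3-generation} for $n=3$ show that every $\mathcal P_{n,\delta,t}$ generates $G_{n,\delta}$. For pairwise inequivalence we use the Nielsen invariance of $J_\rho$ from Lemma~\ref{lem:trace}(1). It then suffices to show that the explicit polynomials of Lemmas~\ref{lem:large-trace} and~\ref{lem:rank3-trace} are injective on the stated index ranges. This is why the ranges differ between the two cases.

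For $n\ge4$ we have $P(t)=2n-9+4t-4t^2$. Then $P(t)-P(t')=4(t-t')(1-(t+t'))$. For distinct $t,t'\ge1$ we have $t+t'\ge3$, so this difference is nonzero. For $n=3$ we have $P(t)=128t^2+16t$. Then $P(t)-P(t')=16(t-t')(8(t+t')+1)$, which is nonzero for distinct $t,t'\ge0$. Hence distinct indices give distinct values of $J_\rho$, and so distinct Nielsen classes in $G_{n,\delta}$.

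Now pass to the outer groups. The quotient map $G_{n,\delta}\to\overline G_{n,\delta}$ is surjective, so by Remark~\ref{rem:quotients} the images of generating pairs are generating pairs. Since $\rho$ kills $\operatorname{Inn}(F_n)$, it factors as $\rho=\bar\rho\circ\pi$. The same remark then gives $J_{\bar\rho}(\bar u,\bar v)=J_\rho(u,v)$. Nielsen invariance of $J_{\bar\rho}$ in $\overline G_{n,\delta}$ therefore separates the outer images by the same injective polynomial values.

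The main obstacle is only a bookkeeping point. Injectivity of the quadratic polynomials fails over all of $\Z$, because of the symmetry $t\mapsto 1-t$ for $n\ge4$. The restricted ranges avoid exactly this symmetry, and this is the check to carry out carefully. Everything else is a citation of earlier lemmas. If one also wants countable infinitude of Nielsen classes, it follows from countability of these finitely generated groups.
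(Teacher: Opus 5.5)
Your proposal is correct and follows the paper's proof. You cite the two generation lemmas, obtain pairwise inequivalence from injectivity of the explicit trace polynomials on the stated ranges, and pass to $\overline G_{n,\delta}$ by factoring $\rho$ as in Remark~\ref{rem:quotients}; the only cosmetic difference is that you check injectivity by factoring $P(t)-P(t')$ rather than by the sign of the forward difference $J_n(t+1)-J_n(t)$ (and your factorization also shows that for $n=3$ no restriction on $t$ is needed, since $8(t+t')+1$ is odd).
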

\begin{proof}
Generation is given by Lemmas~\ref{lem:large-generation}
and~\ref{lem:rank3-generation}. The polynomials in
Lemmas~\ref{lem:large-trace} and~\ref{lem:rank3-trace}, denoted by
$J_n(t)$, satisfy
\[
J_n(t+1)-J_n(t)=\begin{cases}
256t+144>0,&n=3,\quad t\ge0,\\
-8t<0,&n\ge4,\quad t\ge1.
\end{cases}
\]
Thus Lemma~\ref{lem:trace}(1) separates every pair within each signed
family. The projection onto $\overline G_{n,\delta}$ preserves
generation, and the homology representation factors through it, so the
same trace values separate the outer images.
\end{proof}

\begin{corollary}[Arithmetic consequences]\label{cor:arithmetic}
The groups $\GL_n(\Z)$ and $\SL_n(\Z)$ have countably infinitely many
Nielsen classes of generating pairs for every $n\ge3$.
\end{corollary}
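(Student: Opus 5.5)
We obtain the arithmetic families as homology images of the free-group pairs, following Remark~\ref{rem:quotients}. Fix $n\ge3$ and $\delta\in\{0,1\}$. The abelianization $\rho$ maps $\Aut(F_n)$ onto $\GL_n(\Z)$ and $\SAut(F_n)$ onto $\SL_n(\Z)$; the latter holds because $\rho$ sends $R_{ij}$ to the elementary matrix $I_n+E_{ji}$ (up to the transpose convention), and elementary matrices generate $\SL_n(\Z)$ for $n\ge3$. For $\GL_n(\Z)$ we also use that $\Aut(F_n)$ contains an automorphism with determinant $-1$, for instance an inversion of one basis element. Put $H_{n,\delta}=\SL_n(\Z)$ for $\delta=0$ and $\GL_n(\Z)$ for $\delta=1$. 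Then $\rho$ restricts to a surjection $\pi:G_{n,\delta}\twoheadrightarrow H_{n,\delta}$.

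By Proposition~\ref{prop:free}, each $\mathcal P_{n,\delta,t}$ generates $G_{n,\delta}$, with $t\ge1$ for $n\ge4$ and $t\ge0$ for $n=3$. The image of a generating pair under a surjection is again a generating pair, so $\pi(\mathcal P_{n,\delta,t})$ generates $H_{n,\delta}$. Let $\sigma$ be the defining representation of $H_{n,\delta}$ on $\Q^n$. Then $\rho=\sigma\circ\pi$ on $G_{n,\delta}$, and therefore
\[
J_\sigma(\pi(\mathcal P_{n,\delta,t}))=J_\rho(\mathcal P_{n,\delta,t})=J_n(t).
\]
Here $J_n(t)$ is the explicit polynomial from Lemmas~\ref{lem:large-trace} and~\ref{lem:rank3-trace}. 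The strict monotonicity shown in the proof of Proposition~\ref{prop:free} makes $J_n(t)$ injective on the relevant range of $t$. Lemma~\ref{lem:trace}(1), applied to $\sigma$ on $H_{n,\delta}$, then shows that the image pairs are pairwise Nielsen inequivalent. So $H_{n,\delta}$ has infinitely many Nielsen classes of generating pairs.

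For the upper bound on the cardinality: $H_{n,\delta}$ is countable, so it has only countably many ordered pairs and hence at most countably many Nielsen classes. Together with the infinite family this gives exactly countably infinitely many.

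The only step not already available is the surjectivity of $\pi$ onto each matrix group. This is where I would be careful: the transpose convention for $\rho(R_{ij})$ must be checked, but either way the image contains all elementary matrices. The case $\delta=1$ also needs a determinant $-1$ element, and $r_\delta$ or $h_1$ supplies one.
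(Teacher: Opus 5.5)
Your proposal is correct and is essentially the paper's argument: Remark~\ref{rem:quotients} applied to the families of Proposition~\ref{prop:free}, with the monotone polynomials $J_n(t)$ separating the homology images and countability of the matrix groups giving the upper bound. The only difference is that the paper pushes forward the outer pairs in $\Out(F_n)$ and $\SOut(F_n)$, whereas you push forward the pairs in $\Aut(F_n)$ and $\SAut(F_n)$; this makes no difference, since $\rho$ factors through the outer quotients.
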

\begin{proof}
The homology maps $\Out(F_n)\twoheadrightarrow\GL_n(\Z)$ and
$\SOut(F_n)\twoheadrightarrow\SL_n(\Z)$ are surjective. Apply
Remark~\ref{rem:quotients} to the outer pairs in Proposition~\ref{prop:free}:
the same nonconstant trace polynomials separate their generating
homology images. Both linear groups are countable.
\end{proof}

\begin{proof}[Proof of Theorem~\ref{thm:main}]
Proposition~\ref{prop:surface} and Corollary~\ref{cor:symplectic} give
part~(1). Proposition~\ref{prop:free} and Corollary~\ref{cor:arithmetic}
give part~(2). Each group is countable, so its infinitely many Nielsen
classes form a countably infinite set.
\end{proof}

\section*{AI Declaration}
Generative-AI systems, specifically GPT-6 astra and GPT-6.1 sol, were
used as research tools in the development of some of the mathematical
arguments in this paper. In an early stage of the project, an AI system produced an
initial proof candidate for the mapping-class-group result, which the
author subsequently verified and reorganized. From its
underlying mechanism, the author formulated the abstract root-conjugation criterion
of Theorem~\ref{thm:criterion}. The author then used this criterion to
obtain the $\SL_n(\Z)$ and $\GL_n(\Z)$ applications. Generative-AI
systems were also used in later exploratory work that led to the
constructions for $\Aut(F_n)$, $\SAut(F_n)$, $\Out(F_n)$, and
$\SOut(F_n)$. The final proofs, statements, references, and exposition
were checked and revised by the author, who assumes full responsibility
for the contents of the paper.

\end{document}